\newtheorem{theorem}{Theorem}
\newtheorem{claim}[theorem]{Claim}
\newtheorem{corollary}[theorem]{Corollary}
\newtheorem{lemma}[theorem]{Lemma}
\newtheorem{proposition}[theorem]{Proposition}
\theoremstyle{definition}
\newtheorem{definition}[theorem]{Definition}
\theoremstyle{remark}
\newtheorem{example}[theorem]{Example}
\numberwithin{theorem}{section}
\numberwithin{equation}{section}
\def\XXint#1#2#3{{\setbox0=\hbox{$#1{#2#3}{\int}$}
     \vcenter{\hbox{$#2#3$}}\kern-.5\wd0}}
\newcommand{\dd}{\; \mathrm{d}}
\newcommand{\bbG}{\mathbb{G}}
\newcommand{\bbR}{\mathbb{R}}
\newcommand{\bbN}{\mathbb{N}}
\begin{document}
\title[Porosity, Differentiability and Pansu's Theorem]{Porosity, Differentiability and Pansu's Theorem}
\author[Andrea Pinamonti]{Andrea Pinamonti}
\address[Andrea Pinamonti]{Universit\'a degli Studi di Trento, Dipartimento di Matematica, via Sommarive 14, 38123 Povo (TN), Italy}
\email{Andrea.Pinamonti@gmail.com}
\author[Gareth Speight]{Gareth Speight}
\address[Gareth Speight]{University of Cincinnati, Department of Mathematical Sciences, 2815 Commons Way, Cincinnati 45221, United States}
\email{Gareth.Speight@uc.edu}

\date{\today}

\renewcommand{\subjclassname} {\textup{2010} Mathematics Subject Classification}
\subjclass[]{28A75, 43A80, 49Q15, 53C17}

%28C15, %  	Set functions and measures on topological spaces (regularity of measures, etc.)
%49Q15, %  Geometric measure and integration theory, integral and normal currents
%43A80. % (1973-now) Analysis on other specific Lie groups
%53C17, %   Sub-Riemannian geometry
%53C60,   % Finsler spaces and generalizations 
%28A75,  %  Length, area, volume, other geometric measure theory
%26A16  % Lipschitz (Hlder) classes
%58C35   Integration on manifolds; measures on manifolds
%26B20 Integral formulas (Stokes, Gauss, Green, etc.)
%54Exx, % Spaces with richer structures 
%37L40 %Invariant measures
%58D05, %Groups of diffeomorphisms and homeomorphisms as manifolds
%22F50, %Groups as automorphisms of other structures
% 22DXX % Locally compact groups and their algebras
%22E25, % Nilpotent and solvable Lie groups
% 22F30 % Homogeneous spaces
%14M17. %Homogeneous spaces and generalizations 
% 53C30 % Homogeneous manifolds
% 58D19% Group actions and symmetry properties
% 58C25 % Differentiable maps
%49J20  % (1991-now) Optimal control problems involving partial differential equations
%49K21,  % (2010-now) Problems involving relations other than differential equations
%49J15  %(1991-now) Optimal control problems involving ordinary differential equations

\keywords{Pansu's theorem, Porous set, Carnot group}

\begin{abstract}
We use porosity to study differentiability of Lipschitz maps on Carnot groups. Our first result states that directional derivatives of a Lipschitz function act linearly outside a $\sigma$-porous set. The second result states that irregular points of a Lipschitz function form a $\sigma$-porous set. We use these observations to give a new proof of Pansu's theorem for Lipschitz maps from a general Carnot group to a Euclidean space.
\end{abstract}

\maketitle

\section{Introduction}

A Carnot group (Definition \ref{Carnotdef}) is a connected and simply connected Lie group whose Lie algebra admits a stratification. Carnot groups have translations, dilations, Haar measure and points are connected by horizontal curves (Definition \ref{horizontalcurve}), which are used to define the Carnot-Carath\'eodory distance. With so much structure, the study of analysis and geometry in Carnot groups is an active and interesting research area \cite{ABB, B94, BLU, CDPT07, Gro96, Mon02, SC16, Vit14}. 

The geometry of Carnot groups is highly non-trivial. For instance, any Carnot group (except for Euclidean spaces themselves) contains no subset of positive measure that is bi-Lipschitz equivalent to a subset of a Euclidean space \cite{Sem96}. This follows from Pansu's theorem (Theorem \ref{PansuThm}), a generalization of Rademacher's theorem, which asserts that Lipschitz maps between Carnot groups are differentiable almost everywhere \cite{PP, Mag01}. Differentiability of maps between Carnot groups is defined like that of maps between Euclidean spaces, but Euclidean translations, dilations and distances are replaced by their analogues in Carnot groups. 

Many interesting geometric and analytic problems have been studied in the context of Carnot groups. For example, a geometric notion of intrinsic Lipschitz function between subgroups of a general Carnot group was introduced in \cite{FSC2} to study rectifiable sets \cite{FSSC2,Mag} and minimal surfaces \cite{CMPSC1,CMPSC2,SCV}. Moreover, Carnot groups have been applied to study degenerate equations, control theory and potential theory \cite{BLU}. More recently, they were also considered in applied mathematics, such as mathematical finance, theoretical computer science and  mathematical models for neurosciences \cite{CSP}.

%Intrinsic Lipschitz functions need not be metric Lipschitz but for certain subgroups of the Heisenberg group they are intrinsically differentiable almost everywhere \cite{FSC2}. The most general statement for subgroups is not yet known.

In this paper, we generalize to Carnot groups two statements asserting that a set of exceptional points concerning directional derivatives is $\sigma$-porous, and use them to give a new proof of Pansu's theorem for Lipschitz maps from a general Carnot group to a Euclidean space.

A set in a metric space is (upper) porous (Definition \ref{def_porous}) if each of its points sees nearby relatively large holes in the set on arbitrarily small scales. A set is $\sigma$-porous if it is a countable union of porous sets. Porous sets have applications to topics like cluster set theory and differentiability of Lipschitz or convex mappings. Note there are several other types of porosity with their own applications. Properties and applications of porous sets are thoroughly discussed in the survey articles \cite{Zaj87, Zaj05}. 

Stating that an exceptional set is $\sigma$-porous is useful because $\sigma$-porous sets in metric spaces are of first category and have measure zero with respect to any doubling measure. The second statement follows, as in the Euclidean case, using the doubling property and the fact that the Lebesgue density theorem holds for doubling measures. Proving that a set is $\sigma$-porous usually gives a stronger result than showing it is of first category or has measure zero: any topologically complete metric space without isolated points contains a closed nowhere dense set which is not $\sigma$-porous, and $\mathbb{R}^{n}$ contains a closed nowhere dense set of Lebesgue measure zero which is not $\sigma$-porous \cite{Zaj87}.

Porosity has been used to study differentiability of Lipschitz mappings, even very recently. Indeed, \cite{LP03} gives a version of Rademacher's theorem for Frech\'{e}t differentiability of Lipschitz mappings on Banach spaces in which porous sets are $\Gamma$-null. Roughly, a set is $\Gamma$-null if it meets typical infinite dimensional $C^1$ surfaces in measure zero. Applications of porosity to study differentiability in infinite dimensional Banach spaces are thoroughly discussed in the recent book \cite{LPT12}. In the finite dimensional setting, porosity and construction of large directional derivatives were recently used in \cite{PS15} to obtain the following result: for any $n>1$, there exists a Lebesgue null set in $\mathbb{R}^{n}$ containing a point of differentiability for every Lipschitz mapping from $\mathbb{R}^{n}$ to $\mathbb{R}^{n-1}$. Directional derivatives also played a key role in the proof of the following result in a Carnot group: there exists a measure zero set $N$ in the Heisenberg group $\mathbb{H}^{n}$ such that every Lipschitz map $f\colon \mathbb{H}^{n}\to \mathbb{R}$ is Pansu differentiable at a point of $N$ \cite{PinS15}.

We now briefly describe the results of this paper. 

For a Lipschitz function $f\colon \mathbb{R}^{n}\to \mathbb{R}$, directional derivatives need not act linearly: $f'(x,u+v)$ may not agree with $f'(x,u)+f'(x,v)$ (see Example \ref{nonlineardd} for a simple example). However, \cite[Theorem 2]{PZ01} asserts that such an implication does hold at points $x$ outside a $\sigma$ porous (even $\sigma$-directionally porous) set depending on $f$, even for Lipschitz maps on separable Banach spaces. We prove a similar statement in Carnot groups: directional derivatives (in horizontal directions) for Lipschitz maps act linearly outside a $\sigma$-porous set (Theorem \ref{porosityderivatives}). Note that in Carnot groups it only makes sense to consider directional derivatives in horizontal derivatives, since the composition of a Lipschitz function with a non-horizontal curve may not be Lipschitz, and consequently may fail to be differentiable. To relate directional derivatives in the different directions and hence prove Theorem \ref{porosityderivatives}, we prove and apply Lemma \ref{ballboxtype} which shows how to move in a direction $U+V$ by repeatedly moving in horizontal directions $U$ and $V$. Such a statement is non-trivial, since $U$ and $V$ are left-invariant vector fields whose direction depends upon the point where they are evaluated. Roughly, the porous sets appearing in Theorem \ref{porosityderivatives} are sets of points at which (on some scale and to some specified accuracy) a Lipschitz function is approximated by directional derivatives in directions $U$ and $V$, but is not well approximated by the sum of the directional derivatives in the direction $U+V$. Our proof of Theorem \ref{porosityderivatives} is based on the proof of \cite[Theorem 2]{PZ01}. The main differences are an application of Lemma \ref{distances} to control the Carnot-Carath\'eodory distance and Lemma \ref{ballboxtype} to account for the fact that in a Carnot group moving in direction $U$ then $V$ is not the same as moving in direction $U+V$. Further, the conclusion of \cite[Theorem 2]{PZ01} was (Gateaux) differentiability which, as discussed below, no longer holds in our setting.

If a Lipschitz map $f\colon \mathbb{R}^{n}\to \mathbb{R}$ has all directional derivatives at a point $x\in \mathbb{R}^n$ and directional derivatives at $x$ act linearly, then by definition $f$ is differentiable at $x$. In Carnot groups we observe that existence and linear action of only directional derivatives in horizontal directions does not imply Pansu differentiability (Example \ref{linearnotdiff}). To pass from directional derivatives to Pansu differentiability, we first give another application of porosity. 

A point $x$ is a regular point of a Lipschitz function $f$ if whenever a directional derivative of $f$ at $x$ in some direction exists, then that same directional derivative also controls changes in $f$ along parallel lines with the same direction close to $x$ (Definition \ref{defregularpoint}). Proposition \ref{irregularporous} adapts \cite[Proposition 3.3]{LP03} to Carnot groups, showing that the set of points at which a Lipschitz function is not regular is $\sigma$-porous. 

We next prove Theorem \ref{differentiableatlast}. This states that if $f\colon \mathbb{G}\to \mathbb{R}$ is Lipschitz and $x$ is a regular point at which directional derivatives exist and act linearly, then $f$ is Pansu differentiable at $x$. To prove this we use Lemma \ref{pathlemma}, which provides a relatively short horizontal path from $x$ to any nearby point which is a concatenation of lines in directions of a basis of $V_{1}$. We use regularity of the point $x$ to estimate changes along these lines using the directional derivatives at $x$, then linear action of directional derivatives to show that the derivative is indeed a group linear map. While our definition of regular point is a generalization of the corresponding notion in Banach spaces, this application is original.

As a consequence of Theorem \ref{porosityderivatives}, Proposition \ref{irregularporous} and Theorem \ref{differentiableatlast}, we obtain Corollary \ref{directionaltodiff}. This asserts that existence of directional derivatives implies Pansu differentiability outside a $\sigma$-porous set. Since it is not hard to show that directional derivatives of a Lipschitz function exist almost everywhere (Lemma \ref{directionalae}) and porous sets have measure zero, we obtain a new proof of Pansu's Theorem (Corollary \ref{Pansucor}) for mappings from Carnot groups to Euclidean spaces.

One can ask if the results of this paper generalize to mappings between arbitrary Carnot groups. The authors chose to investigate the case of Euclidean targets because it resembles more closely the Banach space case yet already calls for interesting new techniques. Our proofs do not immediately generalize to Carnot group targets. %because at several points telescoping arguments are used, which no longer make sense if the group operation in the target is not commutative. 
The authors intend to investigate the more general case in future works.

\medskip

\noindent \textbf{Acknowledgement.} 
The authors thank the referee for his/her useful comments and suggestions. The authors also thank Enrico Le Donne and Valentino Magnani for interesting discussions and suggestions.

\section{Preliminaries}

We now briefly describe the main notions and results used in the paper; the reader can consult \cite{BLU} for more details. 

\subsection{Carnot groups}

Recall that a Lie group is a smooth manifold which is also a group for which multiplication and inversion are smooth. The Lie algebra associated to a Lie group is the space of left invariant vector fields equipped with the Lie bracket $[\cdot, \cdot]$ defined on smooth functions by
\[[X,Y](f)=X(Y(f))-Y(X(f)).\]
We also denote the direct sum of vector spaces $V$ and $W$ by $V\oplus W$.

\begin{definition}\label{Carnotdef}
A simply connected finite dimensional Lie group $\bbG$ is said to be a \emph{Carnot group of step $s$} if its Lie algebra $\mathfrak{g}$ is \emph{stratified of step $s$}, this means that there exist linear subspaces $V_1,...,V_s$ of $\mathfrak{g}$ such that
\[\mathfrak{g}=V_1\oplus \cdots \oplus V_s\]
with
\[[V_1,V_{i}]=V_{i+1} \mbox{ if }1\leq i\leq s-1, \mbox{ and } [V_1,V_s]=\{0\}.\]
Here $[V_1,V_i]:=\mathrm{span}\{[a,b]: a\in V_1,\ b\in V_i\}.$
\end{definition}

It can be shown that Definition \ref{Carnotdef} implies $[V_{i},V_{j}]\subset V_{i+j}$ if $i+j\leq s$ and $[V_{i},V_{j}]=\{0\}$ if $i+j>s$ \cite[Proposition 1.1.7]{BLU}. 

Let $m_i:=\dim(V_i)$, $h_i:=m_1+\dots +m_i$ for $1\leq i\leq s$, and $h_{0}:=0$. A basis $X_1,\ldots, X_n$ of $\mathfrak{g}$ is \emph{adapted to the stratification} if $X_{h_{i-1}+1},\ldots, X_{h_{i}}$ is a basis of $V_i$ for $1\leq i \leq s$. We define $m:=m_1=\dim(V_{1})$ and note $n=h_s$. The basis $X_{1}, \ldots, X_{m}$ of $V_{1}$ induces an inner product $\omega(\cdot,\cdot)$ on $V_1$ for which $X_{1}, \ldots, X_{m}$ is orthonormal. We denote by $\omega (\cdot)$ the norm on $V_{1}$ induced by this inner product.

We recall the exponential map $\exp \colon \mathfrak{g}\to \mathbb{G}$ is defined by $\exp(X)=\gamma(1)$ where
$\gamma \colon[0,1]\to \mathbb{G}$ is the unique solution to $\gamma'(t)=X(\gamma(t))$ and $\gamma(0)=0$. 
The exponential map is a diffeomorphism between $\mathbb{G}$ and $\mathfrak{g}$. Using the basis $X_{1}, \ldots, X_{n}$ to identify $\mathfrak{g}$ with $\mathbb{R}^n$, we can identify $\mathbb{G}$ with $\mathbb{R}^{n}$ by the correspondence:
\[ \exp(x_{1}X_{1}+\ldots +x_{n}X_{n})\in \mathbb{G} \longleftrightarrow (x_{1}, \ldots, x_{n})\in \mathbb{R}^{n}.\]
Using this identification of $\mathbb{G}$ with $\mathbb{R}^n$, \cite[Corollary 1.3.19]{BLU} states that if $h_{l-1}< j\leq h_l$ for some $1\leq l\leq s$ then:
\begin{align}\label{vfexp}
X_j(x)=\partial_j+\sum_{i>h_l}^n q_{i,j}(x)\partial_i,
\end{align}
where $q_{i,j}$ are suitable homogeneous polynomials completely determined by the group law in $\mathbb{G}$. Using \eqref{vfexp} we can identify $X_j(x)$ with the vector
$e_j+\sum_{i>h_l}^n q_{i,j}(x)e_i$ where $e_j$ is the $j$'th element of the canonical basis of $\mathbb{R}^n$. Denote by $p\colon \mathbb{R}^{n} \to \mathbb{R}^{m}$ the projection onto the first $m$ coordinates, given by $p(x)=(x_{1}, \ldots, x_{m})$. Then $p(X_{j}(x))$ is independent of $x\in \mathbb{G}$, so we can unambiguously define $p(X_j)=p(X_j(x))=e_j$ for every $x\in\mathbb{G}$ and $j=1,\ldots, m$. We extend this definition by linearity to all $V_1$, in particular $p(U(x))=p(U(y))$ for every $x,y\in \mathbb{R}^n$, $U\in V_1$.

To compute the group law in coordinates, it is possible to define a mapping $\diamond\colon \mathfrak{g}\times \mathfrak{g}\to \mathfrak{g}$ for which $(\mathfrak{g}, \diamond)$ is a Lie group and $\exp \colon (\mathfrak{g},\diamond)\to (\mathbb{G},\cdot)$ is a group isomorphism \cite[Theorem 2.2.13]{BLU}, in particular:
\begin{align}\label{usefuldiamond}
\exp(X)\exp(Y)=\exp(X\diamond Y)\quad \mbox{ for all } X,Y\in\mathfrak{g}.
\end{align}
The Baker-Campbell-Hausdorf formula gives a formula for $\diamond$:
\begin{align}\label{BCH}
X\diamond Y= X+Y+\frac{1}{2}[X,Y]+\frac{1}{12}([X,[X,Y]]+[Y,[Y,X]]) + \ldots,
\end{align}
where the higher order terms are nested commutators of $X$ and $Y$.

Denoting points of $\mathbb{G}$ by $(x_{1}, \ldots, x_{n})\in \mathbb{R}^{n}$, the \emph{homogeneity} $d_i\in\bbN$ of the variable $x_i$ is defined by
\[ d_j=i \quad\text {whenever}\; h_{i-1}+1\leq j\leq h_{i}.\]
For any $\lambda >0$, the \emph{dilation} $\delta_\lambda\colon \bbG\to\bbG$, is defined in coordinates by
\[\delta_\lambda(x_1,...,x_n)=(\lambda^{d_1}x_1,...,\lambda^{d_n}x_n)\]
and satisfies $\delta_{\lambda}(xy)=\delta_{\lambda}(x)\delta_{\lambda}(y)$. Using the exponential map, dilations satisfying $\exp \circ \delta_{\lambda} = \delta_{\lambda}\circ \exp$ can be defined on $\mathfrak{g}$. These satisfy $\delta_{\lambda}(E)=\lambda^{i}E$ if $E\in V_{i}$ for some $1\leq i\leq s$.

A \emph{Haar measure} on $\mathbb{G}$ is a non-trivial Borel measure $\mu$ on $\mathbb{G}$ satisfying $\mu(gE)=\mu(E)$ for any $g\in \mathbb{G}$ and Borel set $E\subset \mathbb{G}$. Such a measure is unique up to scaling by a positive constant, so sets of measure zero are defined without ambiguity. Identifying $\bbG$ with $\mathbb{R}^n$, any Haar measure is simply a constant multiple of $n$ dimensional Lebesgue measure $\mathcal{L}^{n}$.

\subsection{Carnot-Carath\'eodory distance}

Recall that a curve $\gamma\colon [a,b]\to \mathbb{R}^{n}$ is \emph{absolutely continuous} if it is differentiable almost everywhere, $\gamma' \in L^{1}[a,b]$ and
\[\gamma(t_{2})=\gamma(t_{1})+\int_{t_{1}}^{t_{2}} \gamma'(t)\dd t\]
whenever $t_{1}, t_{2}\in [a,b]$.

\begin{definition}\label{horizontalcurve}
An absolutely continuous curve $\gamma\colon [a,b]\to \mathbb{G}$ is \emph{horizontal} if there exist $u_{1}, \ldots, u_{m}\in L^{1}[a,b]$ such that
\[\gamma'(t)=\sum_{j=1}^{m}u_{j}(t)X_{j}(\gamma(t))\]
for almost every $t\in [a,b]$. We define the horizontal length of such a curve $\gamma$ by:
\[L(\gamma)=\int_{a}^{b}|u(t)|\dd t,\]
where $u=(u_{1}, \ldots, u_{m})$ and $|\cdot|$ denotes the Euclidean norm on $\mathbb{R}^{m}$.
\end{definition}

The Chow-Rashevskii Theorem asserts that any two points of $\mathbb{G}$ can be connected by horizontal curves \cite[Theorem 9.1.3]{BLU}.

\begin{definition}
The \emph{Carnot-Carath\'eodory distance} $d$ on $\mathbb{G}$ is defined by:
\[d(x,y)=\inf \{ L(\gamma) : \gamma \colon [0,1]\to \mathbb{G} \mbox{ horizontal joining }x\mbox{ to }y \}\]
for $x, y\in \mathbb{G}$.
\end{definition}

It is well-known that the Carnot-Carath\'eodory distance satisfies the relations $d(zx,zy)=d(x,y)$ and $d(\delta_{r}(x),\delta_{r}(y))=rd(x,y)$ for $x, y, z\in \mathbb{G}$ and $r>0$. It induces on $\mathbb{G}$ the same topology as the Euclidean distance but is not bi-Lipschitz equivalent to the Euclidean distance. For convenience, we let $d(x):= d(x,0)$. It follows from the definition of the exponential map that $d(\exp(tE))\leq |t|\omega (E)$ whenever $t\in \mathbb{R}$ and $E\in V_{1}$.

A \emph{homogeneous norm} on $\mathbb{G}$ is a continuous function $D\colon \mathbb{G}\to [0,\infty)$ such that $D(\delta_{\lambda}(x))=\lambda D(x)$ for every $\lambda>0$ and $x\in \mathbb{G}$, and $D(x)>0$ if and only if $x\neq 0$. We will mostly use the homogeneous norm $d(x)=d(x,0)$, but it is also useful to consider the homogeneous norm given by the explicit formula:
\begin{equation}\label{auxnorm}
\|x\|:=\left( \sum_{i=1}^{s} |x^{i}|^{2r!/i} \right)^{\frac{1}{2s!}}
\end{equation}
where $x=(x^1,\ldots, x^s)\in \mathbb{R}^{m}\times \cdots \times\mathbb{R}^{m_s}=\mathbb{R}^n$ and $|x^i|$ denotes the Euclidean norm on $\mathbb{R}^{m_i}$. By \cite[Proposition 5.1.4]{BLU} there exists $c>0$ such that
\begin{align}\label{equivdist}
c^{-1} \|x\|\leq d(x)\leq c\| x\|\quad \mbox{for every }x\in\mathbb{G}.
\end{align}

\subsection{Directional derivatives and Pansu differentiability}

If $x\in \mathbb{G}$ and $E\in V_{1}$ is horizontal then the map $t\mapsto x\exp(tE)$ is Lipschitz. Consequently if $f\colon \mathbb{G}\to \mathbb{R}$ is Lipschitz then the composition $t\mapsto f(x\exp(tE))$ is a Lipschitz mapping from $\mathbb{R}$ to itself, hence differentiable almost everywhere. Thus it makes sense to define directional derivatives of Lipschitz maps $f\colon \mathbb{G}\to \mathbb{R}$ in horizontal directions. Note, however, that if $E\in \mathfrak{g}\setminus V_{1}$ then the composition $t\mapsto f(x\exp(tE))$ may not be Lipschitz. In this paper we only consider directional derivatives in horizontal directions, as in \cite{PinS15}.

\begin{definition}\label{directionalderivative}
Suppose $f\colon \mathbb{G}\to \mathbb{R}$, $x\in \mathbb{G}$ and $E\in V_{1}$. We say that $f$ is \emph{differentiable at $x$ in direction $E$} if the limit
\[Ef(x)=\lim_{t\to 0}\frac{f(x\exp(tE))-f(x)}{t}\]
exists. 
\end{definition}

Suppose $f\colon \bbG\to \bbR$, $x\in \mathbb{G}$ and $X_{j}f(x)$ exists for every $1\leq j\leq m$. Then we define the \emph{horizontal gradient} of $f$ at $x$ by
\begin{equation*}
\nabla_{H}f(x):=\sum_{i=1}^{m}(X_{i}f(x))X_{i}(x),
\end{equation*}
which can be represented in coordinates by $(X_1f(x),...,X_{m}f(x)) \in \mathbb{R}^{m}$.

Let $\widetilde{\mathbb{G}}$ be a Carnot group with distance $\widetilde{d}$ and dilations $\widetilde{\delta}_r$. 

\begin{definition}
A map $L\colon \mathbb{G}\to \widetilde{\mathbb{G}}$ is \emph{group linear} (or is a \emph{Carnot homomorphism}) if $L(xy)=L(x)L(y)$ and $L(\delta_{r}(x))=\widetilde{\delta}_r(L(x))$ whenever $x, y\in \mathbb{G}$ and $r>0$.

A map $f\colon \mathbb{G}\to \widetilde{\mathbb{G}}$ is \emph{Pansu differentiable} at $x\in \mathbb{G}$ if
there exists a group linear map $L\colon \mathbb{G}\to \widetilde{\mathbb{G}}$ such that
\[ \lim_{h\to 0} \frac{\widetilde{d}(f(x)^{-1}f(xh), L(h))}{d(h)}= 0.\]
If such a map $L$ exists then it is unique and we denote it by $df$.
\end{definition}

The following fundamental result generalizes Rademacher's theorem to Carnot groups and is due to Pansu \cite{PP}.

\begin{theorem}[Pansu's Theorem]\label{PansuThm}
Let $f\colon \mathbb{G}\to \widetilde{\mathbb{G}}$ be a Lipschitz map. Then $f$ is Pansu differentiable almost everywhere.
\end{theorem}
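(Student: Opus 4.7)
The plan is to prove the theorem for Euclidean targets $\widetilde{\mathbb{G}}=\mathbb{R}^k$, which is the case the introduction advertises; by treating components separately it is enough to consider a scalar Lipschitz $f\colon \mathbb{G}\to \mathbb{R}$. I would then assemble the statement from four ingredients already flagged in the introduction: (i) existence of all horizontal directional derivatives almost everywhere (Lemma \ref{directionalae}); (ii) linear action of $E\mapsto Ef(x)$ outside a $\sigma$-porous set (Theorem \ref{porosityderivatives}); (iii) regularity of $x$ outside another $\sigma$-porous set (Proposition \ref{irregularporous}); (iv) a deterministic pointwise implication upgrading (i)--(iii) to Pansu differentiability (Theorem \ref{differentiableatlast}). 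Since $\sigma$-porous sets have Haar measure zero, the intersection of the sets produced by (i)--(iii) has full measure, and (iv) produces Pansu differentiability throughout that intersection.

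For (i), fix $E\in V_1$: the map $t\mapsto f(x\exp(tE))$ is Lipschitz for every $x$, and Fubini applied to the foliation of $\mathbb{G}$ by integral curves of $E$ (which is measure-preserving since left translations preserve $\mu$ and $y\mapsto y\exp(tE)$ is left translation by $\exp(tE)$) shows $Ef(x)$ exists $\mu$-almost everywhere. Using the uniform bound $|Ef(x)|\leq \mathrm{Lip}(f)\,\omega(E)$ and linearity of the difference quotient in $E$ along a countable $\omega$-dense subset of $V_1$, one upgrades this to existence of $Ef(x)$ for every $E\in V_1$ at almost every $x$. Steps (ii) and (iii) are exactly the statements of Theorem \ref{porosityderivatives} and Proposition \ref{irregularporous}. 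For (iv), at any $x$ in the full-measure set obtained from (i)--(iii), the candidate derivative is the group linear map $L(h):=\sum_{j=1}^m (X_jf(x))\,(p(h))_j$, read off from the horizontal gradient; Theorem \ref{differentiableatlast} then writes any small $h$ via Lemma \ref{pathlemma} as a concatenation of segments in the basis directions $X_1,\ldots,X_m$ whose total length is comparable to $d(h)$, estimates each segment's contribution to $f(xh)-f(x)$ using regularity at $x$ (to transport the directional derivative information from $x$ to nearby base points), and reassembles the pieces via linearity of $E\mapsto Ef(x)$ to obtain $|f(xh)-f(x)-L(h)|=o(d(h))$.

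The main obstacle is (iv): in $\mathbb{R}^n$, existence plus linearity of directional derivatives in a basis yields differentiability essentially for free, but Example \ref{linearnotdiff} already shows this naive implication fails in a Carnot group. The difficulty is that moving in direction $U$ then $V$ does not coincide with moving in direction $U+V$, so one cannot reach $xh$ by any single straight exponential path in a fixed direction; one must traverse the horizontal broken path supplied by Lemma \ref{pathlemma}, along which the relevant increments of $f$ are not measured at the single base point $x$. Regularity is precisely the device that transports the infinitesimal information at $x$ to those nearby base points, and the pointwise linearity in (ii), together with Lemma \ref{ballboxtype}-type commutator control, is what allows the resulting increments to be summed into $L(h)$ with the correct $o(d(h))$ error.
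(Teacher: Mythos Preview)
Your proposal is correct and follows the paper's own argument essentially verbatim: reduce to scalar targets, combine Lemma \ref{directionalae}, Theorem \ref{porosityderivatives}, Proposition \ref{irregularporous}, and Theorem \ref{differentiableatlast} (packaged as Corollary \ref{directionaltodiff}), and use that $\sigma$-porous sets are Haar-null. Two cosmetic remarks: $y\mapsto y\exp(tE)$ is a \emph{right} translation (harmless here, since $\mathbb{G}$ is unimodular), and in (i) you only need the basis derivatives $X_1f,\ldots,X_mf$, since linearity from (ii) then yields all $Ef$---so the density/upgrade step is unnecessary.
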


We fix a Carnot group $\mathbb{G}$ and an adapted basis $X_1,\ldots, X_n$ of $\mathfrak{g}$, with corresponding inner product norm $\omega$, throughout this paper.

\subsection{Porous sets}

We now define porous sets and $\sigma$-porous sets; for more information see the extensive survey articles \cite{Zaj87, Zaj05}. Intuitively, a set is porous if every point of the set sees relatively large holes in the set on arbitrarily small scales. We denote by $B(x,r)$ the open ball of centre $x$ and radius $r>0$ in a metric space.

\begin{definition}\label{def_porous}
Let $(M, \rho)$ be a metric space, $E\subset M$ and $a\in M$. We say that  $E$ is \emph{porous at $a$} if there exist $\lambda>0$ and a sequence $x_{n}\to a$ such that
\[B(x_n,\lambda \rho(a,x_n))\cap E=\varnothing\]
for every $n\in \mathbb{N}$. 

A set $E$ is \emph{porous} if it is porous at each point $a\in E$ with $\lambda$ independent of $a$. A set is \emph{$\sigma$-porous} if it is a countable union of porous sets. 
\end{definition}

Porous sets in Carnot groups have measure zero. This follows from the fact that Haar measure on Carnot groups is Ahlfors regular, hence doubling, so the Lebesgue differentiation theorem applies \cite[Theorem 1.8]{Hei01}. If a porous set had positive measure then it would have a Lebesgue density point, which is impossible due to the presence of relatively large holes, which have relatively large measure, on arbitrarily small scales. 

\section{Directional derivatives acting linearly}

Even for a Lipschitz function $f\colon \mathbb{R}^{2}\to \mathbb{R}$ with all directional derivatives $f'(x,v)$ at a point $x\in \mathbb{R}^2$, directional derivatives need not act linearly: $f'(x,a_{1}v_{1}+a_{2}v_{2}) \neq a_{1}f'(x,v_{1})+a_{2}f'(x,v_{2})$ in general. Consequently, existence of directional derivatives alone does not suffice for differentiability. We illustrate this with a simple example.

\begin{example}\label{nonlineardd}
Define $f\colon \mathbb{R}^2 \to \mathbb{R}$ by $f(x,y)=\min (x,y)$. Then $f$ is a Lipschitz function with all directional derivatives at $(0,0)$. However, $f$ is not differentiable at $(0,0)$, since:
\[f'((0,0),(1,1))=1\neq 0=\frac{\partial f}{\partial x}(0,0)+\frac{\partial f}{\partial y}(0,0).\]
\end{example}

If $x$ lies outside a $\sigma$-porous set depending on $f$, then directional derivatives do act linearly: if $f'(x,v_{1})$ and $f'(x,v_{2})$ exist then $f'(x,a_{1}v_{1}+a_{2}v_{2})$ exists and is equal to $a_{1}f'(x,v_{1})+a_{2}f'(x,v_{2})$. Such a statement holds even for Lipschitz maps from a separable Banach space $X$ to a Banach space $Y$ \cite[Theorem 2]{PZ01}. In this section we prove an analogue in Carnot groups, showing that directional derivatives act linearly outside a $\sigma$-porous set (Theorem \ref{porosityderivatives}).

\subsection{Geometric lemmas}

We now give several results describing the geometry of Carnot groups, essential for our study of porosity and differentiability. Before proving the first geometric lemma, we state an estimate for the norm of a commutator of group elements \cite[Lemma 2.13]{FS}. Recall that $\mathbb{G}$ is a Carnot group of step $s$, $h_s:=n$ and the homogeneous norm $\|\cdot \|$ was defined in \eqref{auxnorm}. The following lemma is stated using a different homogeneous norm in \cite{FS}. However the desired statement follows from \cite[Lemma 2.13]{FS} because the ratio of any two homogeneous norms is bounded.

\begin{lemma}\label{stimagroup}
There is a constant $C>0$ such that
\begin{align*}
\|x^{-1}yx\|\leq C\Big(\|y\|+ \|x\|^{\frac{1}{s}}\|y\|^{\frac{s-1}{s}}+\|x\|^{\frac{s-1}{s}}\|y\|^{\frac{1}{s}}\Big)\quad \mbox{for } x,y\in\mathbb{G}.
\end{align*}
\end{lemma}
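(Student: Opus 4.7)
The quickest route is to invoke \cite[Lemma 2.13]{FS} and then use that any two homogeneous norms on a Carnot group are bi-Lipschitz equivalent: if $D$ is the homogeneous norm appearing in \cite{FS}, there exist $c_1,c_2>0$ with $c_1\|x\|\leq D(x)\leq c_2\|x\|$ for all $x\in\mathbb{G}$, and substituting into the $D$-version of the inequality absorbs the constants into $C$. This is what the authors appear to have in mind, and it is strictly the shortest route.

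For a self-contained argument I would proceed via BCH. Write $x=\exp X$ and $y=\exp Y$, so the adjoint formula yields $x^{-1}yx=\exp\bigl(e^{-\mathrm{ad}\,X}(Y)\bigr)$, and step-$s$ nilpotency of $\mathfrak{g}$ truncates the series to $e^{-\mathrm{ad}\,X}(Y)=Y+\sum_{k=1}^{s-1}\frac{(-1)^k}{k!}(\mathrm{ad}\,X)^k(Y)$. Decompose $X=\sum_i X^{(i)}$ and $Y=\sum_j Y^{(j)}$ along the stratification and expand each $(\mathrm{ad}\,X)^k(Y)$ as a finite sum of nested brackets $[X^{(i_1)},[X^{(i_2)},\ldots,[X^{(i_k)},Y^{(j)}]\cdots]]$, each lying in $V_{\ell}$ with $\ell=i_1+\cdots+i_k+j$ and vanishing when $\ell>s$. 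From the definition \eqref{auxnorm} one has $|X^{(i)}|\leq\|x\|^i$ and $|Y^{(j)}|\leq\|y\|^j$, so each nested bracket has Euclidean norm bounded, up to a universal constant, by $\|x\|^{i_1+\cdots+i_k}\|y\|^j$, and therefore contributes at most $\|x\|^{a}\|y\|^{1-a}$ to $\|x^{-1}yx\|$, where $a=(i_1+\cdots+i_k)/\ell$.

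Since $i_1+\cdots+i_k\geq 1$, $j\geq 1$ and $\ell\leq s$, the exponent $a$ lies in $[1/s,(s-1)/s]$. On this interval the log-linear map $a\mapsto \|x\|^a\|y\|^{1-a}$ is monotone, so it is bounded above by the sum of its endpoint values $\|x\|^{1/s}\|y\|^{(s-1)/s}+\|x\|^{(s-1)/s}\|y\|^{1/s}$. Adding the $k=0$ contribution $\|y\|$ and combining the finitely many bracket terms produces the constant $C$ and the desired inequality. The only delicate point in the self-contained version is the grading bookkeeping that forces $a\in[1/s,(s-1)/s]$; this is the main technical obstacle and is exactly where step-$s$ nilpotency of the stratification is used. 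Everything else reduces to routine applications of BCH and the scaling properties of $\|\cdot\|$.
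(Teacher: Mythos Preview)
Your proposal is correct, and your first paragraph is exactly the paper's approach: the paper does not prove this lemma but simply cites \cite[Lemma~2.13]{FS} and remarks that the desired statement follows because any two homogeneous norms on $\mathbb{G}$ are equivalent. Your self-contained BCH argument is a correct alternative that the paper does not supply; the grading bookkeeping forcing $a\in[1/s,(s-1)/s]$ and the log-linearity bound on $\|x\|^a\|y\|^{1-a}$ are both sound.
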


The first geometric lemma bounds the Carnot-Carath\'eodory distance between points obtained by flowing from two nearby points in the direction of the same horizontal vector field.

\begin{lemma}\label{distances}
Suppose $\lambda\in (0,1)$ and $t\in (-1,1)$. Let $x, y\in \mathbb{G}$ satisfy $d(x,y)\leq \lambda |t|$ and $U\in V_{1}$. Then, there exists $C=C_{1}>0$ such that
\[d(x\exp(tU),y\exp(tU))\leq C \lambda^{1/s}|t|\max(1,\omega(U)).\]
\end{lemma}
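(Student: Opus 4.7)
The plan is to use left-invariance of the Carnot--Carath\'eodory distance to reduce the estimate to a conjugation bound, and then apply the commutator-type estimate of Lemma \ref{stimagroup}.

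First I would use left-invariance of $d$ to write
\[
d(x\exp(tU),\, y\exp(tU)) \;=\; d\bigl(0,\, \exp(-tU)\, x^{-1}y\, \exp(tU)\bigr).
\]
Setting $z := x^{-1}y$, left-invariance also gives $d(z) = d(x,y) \leq \lambda|t|$. Using the equivalence of homogeneous norms \eqref{equivdist}, it is enough to produce the desired bound for $\|\exp(-tU)\, z\, \exp(tU)\|$, up to a constant.

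Next I would apply Lemma \ref{stimagroup} with the roles of $x$ and $y$ played by $\exp(tU)$ and $z$ respectively. This yields
\[
\|\exp(-tU)\, z\, \exp(tU)\| \;\leq\; C\Bigl(\|z\| + \|\exp(tU)\|^{1/s}\|z\|^{(s-1)/s} + \|\exp(tU)\|^{(s-1)/s}\|z\|^{1/s}\Bigr).
\]
The two basic size inputs, coming from the hypothesis and the bound $d(\exp(tU)) \leq |t|\omega(U)$ together with \eqref{equivdist}, are
\[
\|z\| \;\lesssim\; \lambda|t|, \qquad \|\exp(tU)\| \;\lesssim\; |t|\,\omega(U).
\]

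Now I would estimate each of the three terms separately against $\lambda^{1/s}|t|\max(1,\omega(U))$. For the first term, $\|z\|\lesssim \lambda|t| \leq \lambda^{1/s}|t|$ because $\lambda\in(0,1)$. For the second term,
\[
\|\exp(tU)\|^{1/s}\|z\|^{(s-1)/s} \;\lesssim\; |t|\,\omega(U)^{1/s}\lambda^{(s-1)/s},
\]
and since $\lambda\leq 1$ and $s\geq 1$ give $\lambda^{(s-1)/s}\leq \lambda^{1/s}$, while $\omega(U)^{1/s}\leq \max(1,\omega(U))$, this is bounded by $\lambda^{1/s}|t|\max(1,\omega(U))$ up to a constant. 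The third term is handled identically, using $\omega(U)^{(s-1)/s}\leq \max(1,\omega(U))$.

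Adding the three estimates and translating back to $d$ via \eqref{equivdist} gives the desired constant $C = C_1$ depending only on $\mathbb{G}$. I do not expect a serious obstacle: the work is entirely in choosing the correct factorisation of exponents so that the weakest power of $\lambda$ (namely $\lambda^{1/s}$) dominates, which is the natural loss coming from conjugation in a step-$s$ stratified group.
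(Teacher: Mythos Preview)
Your approach is essentially identical to the paper's: reduce by left-invariance to a conjugation estimate and apply Lemma~\ref{stimagroup}, then bound each of the three terms. There is one small slip: the inequality $\lambda^{(s-1)/s}\leq \lambda^{1/s}$ requires $(s-1)/s \geq 1/s$, i.e.\ $s\geq 2$; for $s=1$ it fails, and indeed the second term of Lemma~\ref{stimagroup} then contributes $\|\exp(tU)\|\cdot\|z\|^{0}\lesssim |t|\omega(U)$, which is not controlled by $\lambda|t|\max(1,\omega(U))$. The paper handles this by treating $s=1$ separately (the group is abelian, so conjugation is the identity and the estimate is trivial); with that one-line fix your argument is complete and matches the paper's.
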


\begin{proof}
Throughout the proof $C$ will denote a positive constant depending only on $\mathbb{G}$ and possibly different from line to line.
Using equation \eqref{equivdist}, we can estimate with the equivalent homogeneous norm $\|\cdot \|$ instead of $d(\cdot)$. Without loss of generality assume $y=0$ and $s\geq 2$. Indeed, if $s=1$ then $\mathbb{G}\equiv\mathbb{R}^n$ and 
\[\|\exp(tU)^{-1}x\exp(tU)\|=\|x\|\leq C\lambda |t|\]
so the statement follows. Now suppose $s\geq 2$. Using Lemma \ref{stimagroup} and the bound $\|x\|\leq C\lambda |t|$ we have:
\begin{align*}
\|\exp(tU)^{-1}x\exp(tU)\|&\leq C\Big(\|x\|+\|x\|^{\frac{1}{s}}\|\exp(tU)\|^{\frac{s-1}{s}}+\|x\|^{\frac{s-1}{s}}\|\exp(tU)\|^{\frac{1}{s}}\Big)\\
&=C\Big(\|x\|+ \|x\|^{\frac{1}{s}}|t|^{\frac{s-1}{s}}\omega(U)^{\frac{s-1}{s}}+\|x\|^{\frac{s-1}{s}}|t|^{\frac{1}{s}}\omega(U)^{\frac{1}{s}}\Big)\\
&\leq C |t|\Big(\lambda +\lambda^{\frac{1}{s}}\omega(U)^{\frac{s-1}{s}}+\lambda^{\frac{s-1}{s}}\omega(U)^{\frac{1}{s}}\Big)\\
&\leq C |t| \lambda^{\frac{1}{s}}\Big(1+\omega(U)^{\frac{s-1}{s}}+\omega(U)^{\frac{1}{s}}\Big),
\end{align*}
where in the last inequality we used $\lambda\in (0,1)$ and $s\geq 2$. The conclusion easily follows.
%\begin{align*}
%\|\exp(tU)^{-1}x\exp(tU)\|&\leq \|x\|+C|t|^{\frac{1}{s}}\Big(\|x\|^{\frac{s-1}{s}}+\|x\|^{\frac{1}{s}}\Big)\\
%&\leq C|t|\Big(\lambda+\lambda^{\frac{1}{s}}+\lambda^{\frac{s-1}{s}}\Big)\\
%&\leq C|t|\lambda^{\frac{1}{s}}.
%\end{align*}
\end{proof}

We next recall \cite[Lemma 19.1.4]{BLU}, as summarized in \cite[page 717]{BLU}. Keeping in mind \eqref{usefuldiamond}, the lemma describes how to move in the direction of a Lie bracket of many directions, by moving repeatedly backwards and forwards in the individual directions. Recall that an iterated bracket is of length $k$ with entries $Y_1,\ldots, Y_q$ if it is of the form $[Y_{j_1},\ldots[Y_{j_{k-1}}, Y_{j_k}]\ldots]$ for some multi-index $(j_1,\ldots, j_k)\in \{1,\ldots, q\}^k$.

\begin{lemma}\label{diamondlemma}
Suppose $Y_{1}, \ldots, Y_{q} \in \mathfrak{g}$. Then we can write 
\[ [Y_{q}, \ldots [Y_{2}, Y_{1}]\ldots]=Y_{j_{1}}\diamond \cdots \diamond Y_{j_{c(q)}}+R(Y_{1}, \ldots, Y_{q}),\]
where $c(q)$ is an integer depending only on $q$, and
\[Y_{j_{1}}, \ldots, Y_{j_{c(q)}} \in \{ \pm Y_{1}, \ldots, \pm Y_{q}\}.\]
The remainder term $R(Y_{1}, \ldots, Y_{q})$ is a linear combination of brackets of length $\geq q+1$ of the vector fields $Y_{1}, \ldots, Y_{q}$.
\end{lemma}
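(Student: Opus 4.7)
The plan is to induct on $q$, using the Baker--Campbell--Hausdorff formula \eqref{BCH} as the engine that converts a Lie bracket into a $\diamond$-product at the cost of a higher-order remainder.

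As a preliminary step (and the base case $q=2$), I would establish the commutator identity
\[
Y_{2}\diamond Y_{1}\diamond(-Y_{2})\diamond(-Y_{1}) \;=\; [Y_{2},Y_{1}] + \widetilde R(Y_{1},Y_{2}),
\]
where $\widetilde R(Y_{1},Y_{2})$ is a linear combination of iterated brackets of length $\geq 3$ in $Y_{1}, Y_{2}$. This is a direct expansion using \eqref{BCH}: pair the four factors two-by-two to write each half as $\pm(Y_{2}+Y_{1})+\tfrac12[Y_{2},Y_{1}]+\text{brackets of length}\geq 3$; a further application of \eqref{BCH} shows that the linear parts cancel, the two $\tfrac12[Y_{2},Y_{1}]$ pieces add to $[Y_{2},Y_{1}]$, and every other contribution is a nested commutator with at least three entries drawn from $\{Y_{1},Y_{2}\}$. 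This yields the lemma for $q=2$ with $c(2)=4$ and word $(Y_{2},Y_{1},-Y_{2},-Y_{1})$.

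For the inductive step I would assume the lemma for $q-1$ and set $B:=[Y_{q-1},\ldots[Y_{2},Y_{1}]\ldots]\in\mathfrak g$. Applying the base-case identity with the two formal entries $Y_{q}$ and $B$ gives
\[
[Y_{q},B] \;=\; Y_{q}\diamond B\diamond(-Y_{q})\diamond(-B) + \widetilde R(Y_{q},B),
\]
where $\widetilde R(Y_{q},B)$ is a linear combination of brackets of length $\geq 3$ in the two ``letters'' $Y_{q}$ and $B$. The induction hypothesis gives $B = Y_{j_{1}}\diamond\cdots\diamond Y_{j_{c(q-1)}}+R_{q-1}$, with $R_{q-1}$ a combination of brackets of length $\geq q$ in $Y_{1},\ldots,Y_{q-1}$. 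I would then substitute this expression wherever $B$ appears, using that $-B$ is the $\diamond$-inverse $-Y_{j_{c(q-1)}}\diamond\cdots\diamond -Y_{j_{1}}$ (up to absorbing $R_{q-1}$ into a bracket correction), and reapply \eqref{BCH} iteratively to extract a leading word in $\pm Y_{q},\pm Y_{j_{i}}$ of length $2c(q-1)+2$, with everything else collected into the remainder $R(Y_{1},\ldots,Y_{q})$.

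The main obstacle is the bracket-length bookkeeping for this remainder. Two sources of correction must be tracked: (i) the term $\widetilde R(Y_{q},B)$, whose summands are iterated brackets of length $\geq 3$ in $Y_{q}$ and $B$---once $B$ is expanded as a length-$(q-1)$ iterated bracket in the $Y_{i}$'s, each such summand becomes an iterated bracket of length $\geq (q-1)+2=q+1$ in $Y_{1},\ldots,Y_{q}$; and (ii) the BCH corrections that arise when $R_{q-1}$ is slotted into a $\diamond$-product, which always involve at least one nested commutator containing $R_{q-1}$, hence have total length $\geq q+1$ in $Y_{1},\ldots,Y_{q}$, since $R_{q-1}$ already contributes length $\geq q$ and the outer bracket contributes at least one more entry from $\{\pm Y_{q},\pm Y_{j_{i}}\}$. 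Combining these estimates yields the claimed decomposition with the recursion $c(q)=2c(q-1)+2$, completing the induction.
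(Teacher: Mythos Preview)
The paper does not prove this lemma at all: it is simply quoted from \cite[Lemma 19.1.4]{BLU} (see the sentence immediately preceding the statement), so there is no in-paper argument to compare against. Your inductive scheme via the group-commutator identity is the standard approach and is essentially what one finds in \cite{BLU}; the plan is sound and the recursion $c(q)=2c(q-1)+2$ is correct.

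One point in part (ii) deserves to be made explicit. When you replace $B$ by $W+R_{q-1}$ (with $W=Y_{j_1}\diamond\cdots\diamond Y_{j_{c(q-1)}}$) inside the four-fold product $Y_q\diamond B\diamond(-Y_q)\diamond(-B)$, you assert that every resulting correction ``always involves at least one nested commutator containing $R_{q-1}$''. This is true, but it relies on the fact that the universal Lie polynomial $\Phi(X,Y):=X\diamond Y\diamond(-X)\diamond(-Y)$ has \emph{no degree-one part} (the linear contributions $X+Y-X-Y$ cancel). Consequently every monomial in $\Phi(Y_q,W+R_{q-1})-\Phi(Y_q,W)$ carries at least one copy of $R_{q-1}$ together with at least one further entry drawn from $\{Y_q,W\}$; since $R_{q-1}$ contributes bracket-length $\geq q$ and the extra entry contributes $\geq 1$, the total length is $\geq q+1$ as required. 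Without noting this cancellation, a bare $R_{q-1}$ term (length $\geq q$ only) could in principle appear and spoil the estimate. With that check added, your argument is complete.
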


The next lemma shows how to move in a direction $U+V$ by repeatedly making increments in directions $U$ and $V$. Such a statement will be useful when we want to relate directional derivatives in direction $U+V$ to directional derivatives in direction $U$ and in direction $V$.

\begin{lemma}\label{ballboxtype}
There is a constant $C=C_{2}>0$ for which the following holds. For any pair $U, V\in V_{1}$, there exist $U_{1}, \ldots , U_{N} \in \{U, V\}$ and $\rho_{1}, \ldots, \rho_{N}\in \mathbb{R}$ with $|\rho_{i}|\leq C$ and $N\leq C$ such that
\[ \exp(U+V)=\exp(\rho_{1} U_{1})\exp(\rho_{2} U_{2})\cdots \exp(\rho_{N} U_{N}).\]
\end{lemma}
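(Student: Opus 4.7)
The plan is to construct $\exp(U+V)$ stratum-by-stratum, starting from $\exp(U)\exp(V)=\exp(U\diamond V)$---which by BCH equals $\exp(U+V)$ modulo terms in $V_2\oplus\cdots\oplus V_s$---and iteratively multiplying on the right by carefully chosen products of exponentials of $\pm U,\pm V$ (with bounded scalars) to kill the error in $V_{k+1}$ at each stage. The key tool for the cancellation is Lemma~\ref{diamondlemma}, which expresses any iterated bracket of $U,V$ as a $\diamond$-product of $\pm U,\pm V$ up to a remainder of higher-order brackets.

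First I would reduce to the normalization $\omega(U),\omega(V)\leq 1$. For general $U,V$, set $\lambda=1/\max(\omega(U),\omega(V),1)$. If the lemma holds for $\lambda U,\lambda V$, writing $\exp(\lambda U+\lambda V)=\prod_i\exp(\rho_i W_i)$ with $W_i\in\{\lambda U,\lambda V\}$ and $|\rho_i|\leq C$, then applying the dilation $\delta_{1/\lambda}$ (a group homomorphism satisfying $\delta_{1/\lambda}(\exp(\rho W))=\exp(\rho(W/\lambda))$ for $W\in V_1$) yields $\exp(U+V)=\prod_i\exp(\rho_i(W_i/\lambda))$ with $W_i/\lambda\in\{U,V\}$ and the same $\rho_i$. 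So it suffices to obtain uniform bounds under the normalization.

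Under the normalization, I would induct on $k\in\{1,\ldots,s\}$ to build $P_k=\exp(Y_k)$ as a product of at most $M_k$ factors $\exp(\rho U')$ with $U'\in\{U,V\}$ and $|\rho|\leq C_k$, such that $\pi_1(Y_k)=U+V$ and $\pi_j(Y_k)=0$ for $2\leq j\leq k$, where $\pi_j\colon\mathfrak{g}\to V_j$ is the projection. The base case is $P_1=\exp(U)\exp(V)$, and when $k=s$ all higher strata vanish so $P_s=\exp(U+V)$. In the inductive step, iterated BCH (with the Jacobi identity to put brackets in left-nested form) expresses $\pi_{k+1}(Y_k)$ as a linear combination $\sum_j\alpha_j B_j$ of left-nested iterated brackets of length $k+1$ in entries from $\{U,V\}$, with $|\alpha_j|$ polynomially bounded in $C_k$ via universal BCH constants. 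Absorbing $-\alpha_j$ into the outermost entry, Lemma~\ref{diamondlemma} gives $-\alpha_j B_j=Z_{j,1}\diamond\cdots\diamond Z_{j,c(k+1)}+R_j$ with $Z_{j,i}\in\{\pm U,\pm V,\pm\alpha_j U,\pm\alpha_j V\}$ and $R_j\in V_{k+2}\oplus\cdots\oplus V_s$. Defining $Q_{k+1}=\prod_j\prod_i\exp(Z_{j,i})$, grading together with BCH shows $Q_{k+1}=\exp(-\pi_{k+1}(Y_k)+R)$ for some $R\in V_{k+2}\oplus\cdots$ (cross-block brackets between factors whose exponents lie in $V_{k+1}\oplus\cdots$ land in $V_{2k+2}\oplus\cdots\subseteq V_{k+2}\oplus\cdots$). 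Setting $P_{k+1}=P_kQ_{k+1}$ and expanding $Y_k\diamond(-\pi_{k+1}(Y_k)+R)$ by BCH, every new bracket contains at least one factor from $V_{k+1}\oplus\cdots$ combined with a factor of grade $\geq 1$, hence lies in $V_{k+2}\oplus\cdots$. This preserves the already-corrected strata while canceling $\pi_{k+1}$ exactly.

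The main obstacle is maintaining uniform bounds on $|\rho|$ and $N$ throughout the induction. The saving feature is that the step $s$ of $\mathbb{G}$ is fixed, so the induction terminates after finitely many stages; combined with the normalization $\omega(U),\omega(V)\leq 1$ and the fact that each $\alpha_j$ is a bounded-degree polynomial in the previous $\rho$'s with universal BCH rational coefficients, the constants $C_k$ and $M_k$ stay under control. Taking $C$ as the maximum of $C_k$ and $M_k$ across the $s$ stages produces a single universal constant depending only on $\mathbb{G}$.
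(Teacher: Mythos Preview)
Your proof is correct and shares the paper's core strategy: begin with $\exp(U)\exp(V)$ and iteratively cancel the higher-stratum discrepancy using Lemma~\ref{diamondlemma}. The paper organizes the induction differently, however: it inducts on the step of the ambient group by passing to the quotient $\mathbb{H}=\mathbb{G}/\exp(V_s)$, so that at each stage the correction term $Z$ lives in the \emph{top} stratum and the remainder in Lemma~\ref{diamondlemma} (brackets of length $\geq s+1$) vanishes outright. Your intrinsic stratum-by-stratum induction within the fixed group $\mathbb{G}$ instead carries those remainders forward into later stages; this costs a little more bookkeeping but avoids setting up quotient groups and the associated map $dF$. One minor remark: your normalization $\omega(U),\omega(V)\leq 1$ is harmless but unnecessary, since the coefficients $\alpha_j$ arising from BCH are polynomials in the previous $\rho_i$'s with universal rational coefficients and do not depend on $\omega(U)$ or $\omega(V)$ at all---the same bound works uniformly without rescaling.
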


\begin{proof}
We prove the lemma by induction on the step of $\mathbb{G}$. Throughout the proof, $C$ denotes a constant depending only on $\mathbb{G}$, which may vary from line to line. If $\mathbb{G}$ has step one the statement is clear: simply take $N=2$, $U_{1}=U$, $U_{2}=V$ and use the equality:
\[\exp(U+V)=\exp(U)\exp(V),\]
which follows from the Baker-Campbell-Hausdorff formula \eqref{usefuldiamond} and \eqref{BCH}.

Suppose $s>1$ and the lemma holds for Carnot groups of step $s-1$. Let $\mathbb{G}$ be a Carnot group of step $s$ with Lie algebra $\mathfrak{g}=V_{1}\oplus \cdots \oplus V_{s}$ and adapted basis $X_{1}, \ldots, X_{n}$ of $\mathfrak{g}$. Let $q=h_{s-1}$, then $X_{1}, \ldots, X_{q}$ is a basis of $V_{1}\oplus \cdots \oplus V_{s-1}$ and $X_{q+1}, \ldots, X_{n}$ is a basis of $V_{s}$. Define $\mathbb{H}$ as the quotient group of $\bbG$ modulo $\exp(V_s)$. Then $\mathbb{H}$ is a Carnot group of step $s-1$. Let $F\colon \mathbb{G}\to \mathbb{H}$ be the quotient mapping and define $Y_i:=dF(X_i)$ for $1\leq i\leq q$.
Then $Y_{1}, \ldots, Y_{q}$ is an adapted basis of the Lie algebra $\mathfrak{H}=W_{1}\oplus \cdots \oplus W_{s-1}$ of $\mathbb{H}$. The Lie bracket is given by:
\begin{equation}\label{newbrackets} [Y_{i},Y_{j}]_{\mathbb{H}}=\sum_{k=1}^{q}a_{k}Y_{k} \qquad \mbox{if} \qquad [X_{i},X_{j}]_{\mathbb{G}}=\sum_{k=1}^{n}a_{k}X_{k}.\end{equation}

Suppose $U, V\in V_{1}$. Then $dF(U), dF(V)\in W_{1}$. Since $\mathbb{H}$ has step $s-1$, we may apply our inductive hypothesis to $\mathbb{H}$ and use linearity of $dF$ to write:
\begin{equation}\label{hypothesis} \exp_{\mathbb{H}}(dF(U+V))=\exp_{\mathbb{H}}(\rho_{1}dF(U_1))\cdots \exp_{\mathbb{H}}(\rho_{N}dF(U_N)),\end{equation}
with $U_{i}\in \{ U, V\}$, $|\rho_{i}|\leq C$ and $N\leq C$. Here $C$ is a constant depending only on $\mathbb{H}$, hence $C$ is determined by $\mathbb{G}$. Since $\exp_{\mathbb{H}}\circ \, dF = F\circ \exp_{\mathbb{G}}$, \eqref{hypothesis} implies:
\begin{align}
F(\exp_{\mathbb{G}}(U+V))&=F(\exp_{\mathbb{G}}(\rho_{1}U_1))\cdots F(\exp_{\mathbb{G}}(\rho_{N}U_N))\nonumber \\
&=F(  \exp_{\mathbb{G}}(\rho_{1}U_1)\cdots \exp_{\mathbb{G}}(\rho_{N}U_N)).\label{touse1}
\end{align}

\begin{claim}\label{formforZ}
There exists $Z\in V_{s}$ such that
\begin{equation}\label{someZ}  \exp_{\mathbb{G}}(U+V+Z) = \exp_{\mathbb{G}}(\rho_{1}U_1)\cdots \exp_{\mathbb{G}}(\rho_{N}U_N).\end{equation}
Furthermore, $Z=\eta_{1}Z_{1}+ \ldots +\eta_{p}Z_{p}$ with $|\eta_{i}|\leq C$ and $p\leq C$ for a constant $C$ determined by $\mathbb{G}$. Each term $Z_{i}$ is a Lie bracket of length $s$ of $U$ and $V$.
\end{claim}

\begin{proof}
The map $\exp_{\mathbb{G}}$ is a diffeomorphism, so necessarily there exists $Z\in \mathfrak{g}$ such that \eqref{someZ} holds. Combining \eqref{touse1} and \eqref{someZ} gives:
\[ F(\exp_{\mathbb{G}}(U+V+Z))= F(\exp_{\mathbb{G}}(U+V)).\]
Hence:
\[ \exp_{\mathbb{H}} (dF(U)+dF(V)+dF(Z))= \exp_{\mathbb{H}} (dF(U)+dF(V)).\]
Since $\exp_{\mathbb{H}}$ is a diffeomorphism this implies 
\[ dF(U)+dF(V)+dF(Z) = dF(U)+dF(V),\]
so $dF(Z)=0$. The kernel of $dF$ is $V_{s}$, so $Z\in V_{s}$. 
By \eqref{someZ}, the fact that $\exp_{\mathbb{G}}$ is invertible and \eqref{usefuldiamond} we obtain:
\begin{align}
Z&=\exp_{\bbG}^{-1}(\exp_{\mathbb{G}}(\rho_{1}U_1)\cdots \exp_{\mathbb{G}}(\rho_{N}U_N))-U-V\\
\nonumber
&=(\rho_1 U_1) \diamond \cdots \diamond (\rho_N U_N) - U - V.
\end{align}
Using the Baker-Campbell-Hausdorff formula \eqref{BCH} we can write:
\begin{align}
Z=J_{1}+\ldots+J_{s},
\end{align}
where each $J_i$ is a linear combination of Lie brackets of length $i$ of $U$ and $V$. The number of terms in each linear combination is bounded by a constant depending on $\mathbb{G}$. The scalars in these linear combinations are polynomials in the numbers $\rho_i$, with bounded degree and coefficients. Since the numbers $\rho_{i}$ are uniformly bounded, the scalars are also bounded by a constant depending on $\mathbb{G}$. Note $Z\in V_{s}$ implies $J_i=0$ for every $1\leq i\leq s-1$. Hence $Z=J_{s}$ is of the desired form.
\end{proof}

Any of the $Z_{i}$ in Claim \ref{formforZ} can be written in the form:
\begin{equation}\label{formstoadd} [E_{s}, \ldots [E_{2}, E_{1}]\ldots]_{\mathbb{G}} \in V_{s}, \qquad E_{i} \in \{ U, \, V\}.\end{equation}
Given such a Lie bracket, use Lemma \ref{diamondlemma} to find $j_{1}, \ldots, j_{c(s)}$ such that
\[ [E_{s}, \ldots [E_{2}, E_{1}]\ldots]_{\mathbb{G}} = E_{j_{1}}\diamond \cdots \diamond E_{j_{c(s)}}+R(E_{1}, \ldots, E_{s}),\]
where $c(s)$ is an integer depending only on $s$, and
\[E_{j_{1}}, \ldots, E_{j_{c(s)}} \in \{ \pm U, \pm V \}.\]
The remainder term $R(E_{1}, \ldots, E_{s})$ is a linear combination of brackets of height $\geq s+1$ of the vector fields $E_{1}, \ldots, E_{s}$. Since $\mathbb{G}$ has step $s$, this implies $R=0$. Consequently:
\[ [E_{s}, \ldots [E_{2}, E_{1}]\ldots]_{\mathbb{G}} = E_{j_{1}}\diamond \cdots \diamond E_{j_{c(s)}}.\]
Using \eqref{usefuldiamond}, this implies:
\begin{equation}\label{onebracketgood} \exp_{\mathbb{G}}([E_{s}, \ldots [E_{2}, E_{1}]\ldots]_{\mathbb{G}} ) = \exp_{\mathbb{G}}(E_{j_{1}}) \cdots \exp_{\mathbb{G}}(E_{j_{c(s)}}),\end{equation}
where $c(s)\leq C$. 

Claim \ref{formforZ} states that $Z=\eta_{1}Z_{1}+ \ldots +\eta_{p}Z_{p}$ with $|\eta_{i}|\leq C$, $p \leq C$ and each $Z_{i}$ of the form in \eqref{formstoadd}. Since $Z_{i}\in V_{s}$ for $1\leq i\leq p$ we have $[Z_{i},Z_{j}]_{\mathbb{G}}=0$, so:
\[\exp_{\mathbb{G}}(Z)=\exp_{\mathbb{G}}(\eta_{1} Z_1)\cdots \exp_{\mathbb{G}}(\eta_{p}Z_p).\]
Hence we may use \eqref{onebracketgood} for each $Z_{i}$ and combine the resulting expressions to write:
\begin{equation}\label{Zdecomp} \exp_{\mathbb{G}}(Z)= \exp_{\mathbb{G}}(\delta_{1}Q_{1})\cdots \exp(\delta_{M}Q_{M}),\end{equation}
with $M\leq C$, $|\delta_{i}|\leq C$ and $Q_{i}\in \{ U, V\}$. Combining \eqref{someZ} and \eqref{Zdecomp}, we deduce:
\begin{align}\label{repUplusV}
& \exp_{\mathbb{G}}(\rho_{1}U_1)\cdots \exp_{\mathbb{G}}(\rho_{N}U_N)  \exp_{\mathbb{G}}(-\delta_{M}Q_{M})\cdots \exp_{\mathbb{G}}(-\delta_{1}Q_{1}) \nonumber \\
&\qquad = \exp_{\mathbb{G}}(U+V+Z)\exp_{\mathbb{G}}(-Z)\nonumber \\
&\qquad =\exp_{\mathbb{G}}(U+V).
\end{align}
The second equality follows from the fact $Z\in V_{s}$ so $[U+V+Z,-Z]_{\mathbb{G}}=0$ and the Baker-Campbell-Hausdorff formula \eqref{BCH} simplifies. Notice that $|\rho_{i}|, |\delta_{i}|, N,M \leq C$ and $U_{i}, Q_{i} \in \{ U, V\}$, so \eqref{repUplusV} gives the required representation holds for $\exp(U+V)$. Hence the lemma holds for $\mathbb{G}$ which has step $s$. This completes the inductive step, proving the lemma.
\end{proof}

\subsection{Geometry to differentiability}

We now use Lemma \ref{distances} and Lemma \ref{ballboxtype} to show directional derivatives act linearly outside a $\sigma$-porous set.

\begin{theorem}\label{porosityderivatives}
Suppose $f\colon \mathbb{G} \to \mathbb{R}$ is Lipschitz. Then there is a $\sigma$-porous set $A\subset \mathbb{G}$ such that directional derivatives act linearly at every point $x\in \mathbb{G} \setminus A$, namely the following implication holds:
if $E_{1}f(x)$ and $E_{2}f(x)$ exist for some $E_{1}, E_{2}\in V_{1}$, then $(a_{1}E_{1}+a_{2}E_{2})f(x)$ exists and
\[(a_{1}E_{1}+a_{2}E_{2})f(x)=a_{1}E_{1}f(x)+a_{2}E_{2}f(x)\]
for all $a_{1}, a_{2}\in \mathbb{R}$.
\end{theorem}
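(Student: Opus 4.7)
The plan is to mimic the argument of \cite[Theorem 2]{PZ01}, substituting Lemmas \ref{ballboxtype} and \ref{distances} for the linear structure available in Banach spaces. Since $(aE)f(x)=a\cdot Ef(x)$ follows from a change of variable in the defining limit, I need only prove additivity: whenever $Uf(x)$ and $Vf(x)$ exist for $U,V\in V_{1}$, then $(U+V)f(x)$ exists and equals $Uf(x)+Vf(x)$. The map $E\mapsto Ef(x)$, where defined, is Lipschitz in $\omega$-norm with constant at most $\mathrm{Lip}(f)$ (by left-invariance of $d$ and the estimate $d(\exp(-tE_{1})\exp(tE_{2}))\leq|t|\omega(E_{2}-E_{1})+o(t)$), so by continuity it suffices to verify additivity for $(U,V)$ ranging over a countable dense subset $D\subset V_{1}\times V_{1}$.

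For each $(U,V)\in D$, rational $(\alpha,\beta,\epsilon)$ with $\epsilon>0$, and $j\in\mathbb{N}$, let $A=A(U,V,\alpha,\beta,\epsilon,j)$ be the set of $x\in\mathbb{G}$ for which some $t$ with $|t|\in(2^{-j-1},2^{-j}]$ makes $|f(x\exp(sU))-f(x)-s\alpha|\leq\epsilon|s|$ and $|f(x\exp(sV))-f(x)-s\beta|\leq\epsilon|s|$ hold for all $|s|\leq C^{\ast}|t|$, yet $|f(x\exp(t(U+V)))-f(x)-t(\alpha+\beta)|\geq M\epsilon|t|$. The constants $C^{\ast},M$ are to be chosen in terms of $C_{1},C_{2},\omega(U),\omega(V),\mathrm{Lip}(f)$ large enough to close the argument below. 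Any point at which additivity fails for some $(U,V)\in D$ lies in some such $A$ after rationally approximating both derivatives and a small accuracy threshold, so the total bad set is contained in the countable union of these $A$'s.

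Fix $x\in A$ with scale $t$, and let $y:=x\exp(t(U+V))$. I claim $B(y,\eta|t|)\cap A=\varnothing$ for a suitable $\eta>0$ independent of $x$, which gives porosity with the sequence $y$ corresponding to scales $t\to 0$. Suppose for contradiction that $z\in A\cap B(y,\eta|t|)$ has scale $s$. The $j$-stratification forces $|s|/|t|\in(1/2,2)$, so the approximations at $z$ hold on a scale comparable to $|t|$. Apply Lemma \ref{ballboxtype} with $tU,tV$ to write $\exp(t(U+V))=\exp(\rho_{1}tW_{1})\cdots\exp(\rho_{N}tW_{N})$ with $W_{i}\in\{U,V\}$, $|\rho_{i}|\leq C_{2}$, $N\leq C_{2}$. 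Running this broken path in reverse from $z$ ends at $z':=z\exp(-t(U+V))=x\cdot[\exp(t(U+V))(y^{-1}z)\exp(-t(U+V))]$; since $\|y^{-1}z\|\leq\eta|t|$ and $\|\exp(\pm t(U+V))\|\leq C|t|$, Lemma \ref{stimagroup} yields $d(x,z')\leq C|t|\eta^{1/s}$. Along each of the $N$ segments of this path, one compares the change in $f$ at the current intermediate point to the change from $z$ in the same horizontal direction, using Lipschitzness of $f$ and Lemma \ref{distances} to control the discrepancy. Combined with the Lipschitz bounds $|f(y)-f(z)|\leq L\eta|t|$ and $|f(z')-f(x)|\leq LC|t|\eta^{1/s}$, together with the defining inequality at $x$, this yields an estimate of the form $M\epsilon\leq C_{3}\epsilon+(C_{4}+L)\eta^{1/s}$ for constants $C_{3},C_{4}$ depending only on $\mathbb{G},U,V$. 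Choosing $M>C_{3}$ and then $\eta$ sufficiently small contradicts the hypothesis $z\in A$.

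The main obstacle is the path-transfer step in the previous paragraph: Lemma \ref{distances} requires the ratio $\lambda=d(z_{i-1},z)/|\rho_{i}t|$ to lie in $(0,1)$, while the cumulative distance $d(z_{i-1},z)$ along the broken path may be comparable to $|\rho_{i}t|$. A careful analysis of the decomposition produced by Lemma \ref{ballboxtype}, or a further subdivision of each segment so that Lemma \ref{distances} applies at each step with a small $\lambda$, will be needed to keep the relevant ratios controllable; it is precisely the gain from $\lambda^{1/s}$ over the trivial $O(1)$ estimate that must absorb the iterated errors into the single large prefactor $M\epsilon|t|$.
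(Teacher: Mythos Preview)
Your proposal has a genuine gap, and it is precisely the one you flag at the end: the path-transfer step cannot be closed as written. You want to control $f(z_{i-1}\exp(\rho_{i}tW_{i}))-f(z_{i-1})$ by comparing it with $f(z\exp(\rho_{i}tW_{i}))-f(z)$ via Lemma~\ref{distances}, but after even one step of the broken path one has $d(z_{i-1},z)$ of order $|t|$, while $|\rho_{i}t|$ is also of order $|t|$. Hence the parameter $\lambda$ in Lemma~\ref{distances} is bounded away from zero, the $\lambda^{1/s}$ gain evaporates, and the accumulated error is of order $\mathrm{Lip}(f)|t|$ rather than $\varepsilon|t|$. Subdividing the segments does not help: shrinking $|\rho_{i}t|$ makes each application of Lemma~\ref{distances} worse, not better, and the number of pieces grows. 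There is also a secondary issue: your reduction ``by continuity it suffices to verify additivity for $(U,V)$ in a dense set'' is not sound, because existence of $Uf(x)$ does not imply existence of $U'f(x)$ for nearby $U'$; one must work instead with rational approximations to the \emph{values} of the directional derivatives, as the paper does.

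The paper's argument avoids the transfer problem by reversing the logic of where the hole sits. Starting from $x\in A(U,V,y,z,\varepsilon,\delta)$, one uses the \emph{good} approximations \eqref{Ugood}--\eqref{Vgood} at $x$ along the broken path $x=x_{0},x_{1},\ldots,x_{N}=x\exp(t(U+V))$. Since the sum telescopes and the total error must exceed $2\varepsilon C_{2}|t|$ by \eqref{UVbad}, a pigeonhole step (Claim~\ref{claim1}) produces a single index $i$ with
\[
|f(x_{i}\exp(t\rho_{i+1}U_{i+1}))-f(x_{i})-t\rho_{i+1}y_{i+1}|>2\varepsilon|t|.
\]
The porosity hole is then placed at $x_{i}$, not at the endpoint $x\exp(t(U+V))$. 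For any $q$ with $d(q,x_{i})\leq\lambda|t|$ and $\lambda$ chosen proportional to $\varepsilon^{s}$, a \emph{single} application of Lemma~\ref{distances} (with genuinely small $\lambda$) shows that the same inequality holds with $x_{i}$ replaced by $q$, so \eqref{Ugood} or \eqref{Vgood} fails at $q$ and hence $q\notin A$. Crucially, this requires transferring only one segment, from $x_{i}$ to a point at distance $\lambda|t|$, so the $\lambda^{1/s}$ gain is real. Your scheme tries to place the hole at the endpoint and therefore must transfer the entire broken path across a distance comparable to its own length; that is why it stalls.
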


It follows directly from the definition that if a directional derivative $Ef(x)$ exists then $(sE)f(x)$ exists and is equal to $s(E(f(x)))$ for any $s\in \mathbb{R}$. Hence to prove Theorem \ref{porosityderivatives} we may assume $a_{1}=a_{2}=1$.

Recall the constant $C_{2}$ from Lemma \ref{ballboxtype}. Given $U, V\in V_{1}\setminus \{0\}$, $y, z\in \mathbb{R}$ and $\varepsilon, \delta>0$, let $A(U,V,y,z,\varepsilon,\delta)$ be the set of $x\in \mathbb{G}$ such that for all $|t|<\delta$:
\begin{equation}\label{Ugood} |f(x\exp(tU))-f(x)-ty|\leq \varepsilon |t|,\end{equation}
\begin{equation}\label{Vgood} |f(x\exp(tV))-f(x)-tz|\leq \varepsilon |t|,\end{equation}
and there exist arbitrarily small $t$ for which:
\begin{equation}\label{UVbad}|f(x\exp(tU+tV))-f(x)-t(y+z)|> 2\varepsilon C_{2}|t|.\end{equation}

Fix $x\in A(U,V,y,z,\varepsilon,\delta)$. Use Lemma \ref{ballboxtype} to choose $U_{1}, \ldots, U_{N} \in \{ U, V\}$ and $\rho_{1}, \ldots, \rho_{N}\in \mathbb{R}$ with $|\rho_{i}|\leq C_{2}$ and $N\leq C_{2}$ such that
\begin{equation}\label{composition}
\exp(U+V)=\exp(\rho_{1}U_{1})\cdots \exp(\rho_{N}U_{N}).
\end{equation}
Choose 
\begin{equation}\label{tsmall}
|t|<\min \left( \delta,\, \frac{1}{(1+\omega(U)+\omega(V))C_{2}}\right)
\end{equation}
satisfying \eqref{UVbad}. Dilating both sides of \eqref{composition} by the factor $t$ gives:
\begin{equation}\label{compositiont} 
\exp(tU+tV)=\exp(t\rho_{1}U_{1})\cdots \exp(t\rho_{N}U_{N}).
\end{equation}

Let $x_{0}=x$. For $1\leq i\leq N$ let $x_{i}=x_{i-1}\exp (t\rho_{i}U_{i})$, $y_{i}=y$ if $U_{i}=U$ and $y_{i}=z$ if $U_{i}=V$. Note that $x_{N}=x\exp (tU+tV)$ by \eqref{compositiont}. 

\begin{claim}\label{claim1}
If $U$ and $V$ are linearly independent then the following statements hold:
\begin{enumerate}
\item $\sum_{i=1}^{N}\rho_{i}y_{i} =y+z$,
\item there exists $0\leq i\leq N-1$ such that
\[|f(x_{i+1})-f(x_{i})-t\rho_{i+1}y_{i+1}|>2\varepsilon |t|.\]
\end{enumerate}
\end{claim}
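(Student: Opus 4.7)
The plan is to treat the two conclusions independently. Part~(1) is a purely algebraic identity that follows from the Baker-Campbell-Hausdorff formula together with the linear independence of $U$ and $V$, while Part~(2) is a telescoping argument that uses Part~(1) to derive a contradiction from \eqref{UVbad}. Notably, the hypotheses \eqref{Ugood} and \eqref{Vgood} are not needed for this particular claim; they will enter later when proving that $A(U,V,y,z,\varepsilon,\delta)$ is porous.

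For Part~(1), I would compare the $V_1$-components of both sides of the identity $\exp(U+V)=\exp(\rho_1 U_1)\cdots \exp(\rho_N U_N)$. By iterating \eqref{usefuldiamond}, the right-hand side equals $\exp(\rho_1 U_1 \diamond \cdots \diamond \rho_N U_N)$, and by \eqref{BCH} every term in $\rho_1 U_1 \diamond \cdots \diamond \rho_N U_N$ beyond the linear piece $\sum_{i=1}^N \rho_i U_i$ is a nested Lie bracket of elements of $V_1$ and hence lies in $V_2\oplus\cdots\oplus V_s$. Since $\exp$ is a diffeomorphism, projecting the resulting equality onto $V_1$ yields $\sum_{i=1}^N \rho_i U_i = U+V$. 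Grouping the indices according to whether $U_i=U$ or $U_i=V$, and invoking linear independence of $U$ and $V$, forces
\[\sum_{i:\,U_i=U}\rho_i \;=\; 1 \;=\; \sum_{i:\,U_i=V}\rho_i.\]
Since $y_i=y$ precisely when $U_i=U$ and $y_i=z$ precisely when $U_i=V$, we conclude $\sum_{i=1}^N \rho_i y_i = y+z$. The crucial role of linear independence here is to provide two scalar constraints rather than one; without it the projection to $V_1$ would pin down only a single weighted sum, not both.

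For Part~(2), I would argue by contradiction. If no such index $i$ existed, then $|f(x_{i+1})-f(x_i)-t\rho_{i+1}y_{i+1}|\leq 2\varepsilon|t|$ for every $0\leq i \leq N-1$. Telescoping these differences and applying the triangle inequality yields
\[\Bigl|f(x_N)-f(x_0) - t\sum_{i=1}^N \rho_i y_i\Bigr| \;\leq\; 2N\varepsilon|t| \;\leq\; 2C_2\varepsilon|t|,\]
using $N\leq C_2$ from Lemma~\ref{ballboxtype}. Substituting $x_0=x$, $x_N=x\exp(tU+tV)$ from \eqref{compositiont}, and $\sum_{i=1}^N \rho_i y_i = y+z$ from Part~(1) gives $|f(x\exp(tU+tV))-f(x)-t(y+z)|\leq 2C_2\varepsilon|t|$, directly contradicting \eqref{UVbad}. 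The only delicate ingredient throughout is the algebraic extraction of $\sum_i \rho_i U_i = U+V$ in Part~(1); once that is in hand the telescoping contradiction in Part~(2) is routine.
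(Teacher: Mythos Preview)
Your proposal is correct and matches the paper's proof essentially line for line: Part~(1) is obtained by projecting the identity \eqref{composition} onto the first layer (the paper uses the coordinate projection $p$ rather than invoking BCH explicitly, but this is the same computation), and Part~(2) is the identical telescoping contradiction using $N\leq C_2$ and \eqref{UVbad}.
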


\begin{proof}
We first verify (1). Define:
\[I_{U}=\{i: U_{i}=U\} \mbox{ and } I_{V}=\{i: U_{i}=V\}.\]
Recall that $p\colon \mathbb{G}\to \mathbb{R}^{m}$ is the projection onto the first $m$ coordinates. The equality \eqref{composition} implies
\begin{align*}
p(U)+p(V)&=\rho_{1}p(U_{1})+\ldots +\rho_{N}p(U_{N})\\
&= \Big(\sum_{i\in I_{U}} \rho_{i}\Big) p(U) + \Big(\sum_{i\in I_{V}} \rho_{i}\Big) p(V).
\end{align*}
Since $U, V\in V_{1}$ and are linearly independent, $p(U)$ and $p(V)$ are linearly independent. Hence:
\[  \sum_{i\in I_{U}} \rho_{i} =  \sum_{i\in I_{V}} \rho_{i}   =1.\]
Consequently:
\begin{align*}
\sum_{i=1}^{N}\rho_{i}y_{i}&= \Big(\sum_{i\in I_{U}} \rho_{i}\Big)y + \Big(\sum_{i\in I_{V}} \rho_{i}\Big)z\\
&=y+z,
\end{align*}
which proves (1).

Next suppose (2) fails. Then:
\begin{equation}\label{dontforget}
|f(x_{i+1})-f(x_{i})-t\rho_{i+1}y_{i+1}|\leq 2\varepsilon |t| \mbox{ for }0\leq i\leq N-1.
\end{equation}
We estimate using (1), \eqref{dontforget} and $N\leq C_{2}$:
\begin{align*}
| f(x\exp(tU+tV))-f(x)-t(y+z) | &= \left| f(x_{N})-f(x)-t\sum_{i=1}^{N} \rho_{i}y_{i}\right|\\
&=\left| \sum_{i=1}^{N} (f(x_{i})-f(x_{i-1})-t\rho_{i}y_{i}) \right|\\
&\leq \sum_{i=1}^{N} |f(x_{i})-f(x_{i-1})-t\rho_{i}y_{i}|\\
&\leq 2\varepsilon C_{2}|t|,
\end{align*}
This estimate violates our choice of $t$ satisfying \eqref{UVbad}. This proves (2).
\end{proof}

\begin{claim}\label{claim2}
The set $A(U,V,y,z,\varepsilon,\delta)$ is porous for each choice of the parameters $U, V, y, z, \varepsilon, \delta$.
\end{claim}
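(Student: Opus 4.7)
The plan is to fix $x\in A:=A(U,V,y,z,\varepsilon,\delta)$ and produce a sequence $q_n\to x$ together with a uniform $\lambda>0$ such that each ball $B(q_n,\lambda|t_n|)$ is disjoint from $A$; since $d(x,q_n)\leq K|t_n|$ for a fixed constant $K=K(U,V,C_2)$, this yields porosity of $A$ at $x$ with constant $\lambda/K$. The case of linearly dependent $U,V$ is easy: either $A$ is empty, or a one-dimensional argument along the common direction applies. From now on assume $U,V$ are linearly independent so that Claim \ref{claim1} is available.

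By definition of $A$ one can choose a sequence $t_n\to 0$ with each $t_n$ satisfying \eqref{tsmall}, the additional bound $|t_n|<\delta/C_2$, and \eqref{UVbad}. For each $n$, Claim \ref{claim1}(2) produces an index $i_n\in\{0,\ldots,N-1\}$ such that, with $\tau_n:=t_n\rho_{i_n+1}$,
\[\bigl|f(x_{i_n+1})-f(x_{i_n})-\tau_n y_{i_n+1}\bigr|>2\varepsilon|t_n|,\]
where $x_0,\ldots,x_N$ are the intermediate points from the proof of Claim \ref{claim1}. Set $q_n:=x_{i_n}$; the triangle inequality gives $d(x,q_n)\leq C_2^2\max(\omega(U),\omega(V))|t_n|$, so $q_n\to x$.

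To establish porosity, suppose for contradiction that some $w\in B(q_n,\lambda|t_n|)\cap A$. Applying \eqref{Ugood} or \eqref{Vgood} at $w$ (whichever corresponds to $U_{i_n+1}\in\{U,V\}$) with parameter $\tau_n$, valid since $|\tau_n|\leq C_2|t_n|<\delta$, gives $|f(w\exp(\tau_n U_{i_n+1}))-f(w)-\tau_n y_{i_n+1}|\leq\varepsilon|\tau_n|$. Combine this with the Lipschitz estimate $|f(q_n)-f(w)|\leq L\lambda|t_n|$ and, via Lemma \ref{distances} applied to the pair $(w,q_n)$ with relative scale $\lambda|t_n|/|\tau_n|$, with $|f(x_{i_n+1})-f(w\exp(\tau_n U_{i_n+1}))|\leq LC_1\lambda^{1/s}C_2^{(s-1)/s}\max(1,\omega(U_{i_n+1}))|t_n|$. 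The triangle inequality then bounds $|f(x_{i_n+1})-f(q_n)-\tau_n y_{i_n+1}|$ from above by $\varepsilon|\tau_n|+E(\lambda)|t_n|$ with $E(\lambda)\to 0$ as $\lambda\to 0$. For $\lambda$ chosen small enough this upper bound drops below $2\varepsilon|t_n|$, contradicting Claim \ref{claim1}(2) and giving $B(q_n,\lambda|t_n|)\cap A=\varnothing$.

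The hard point is that $\varepsilon|\tau_n|\leq\varepsilon C_2|t_n|$ could a priori exceed $2\varepsilon|t_n|$, so the required contradiction rests on having the individual $|\rho_j|$ in the decomposition of Lemma \ref{ballboxtype} uniformly bounded by a number strictly less than $2$. If the decomposition at hand does not satisfy this, I would refine it by subdividing each factor as $\exp(\rho_j U_j)=\exp(\rho_j U_j/k_j)^{k_j}$ along its one-parameter subgroup, choosing $k_j$ large enough to force $|\rho_j/k_j|<2$; this preserves the product and only enlarges the working constant $C_2$, after which the contradiction above goes through uniformly in $n$, producing a single $\lambda>0$ valid for all $x\in A$.
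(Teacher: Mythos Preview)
Your strategy mirrors the paper's: use Claim~\ref{claim1}(2) to locate an intermediate point $q_n=x_{i_n}$ along the broken path where the incremental error exceeds $2\varepsilon|t_n|$, then argue that a small ball around $q_n$ misses $A$. The paper does the last step directly, showing that \eqref{Ugood} or \eqref{Vgood} must fail at every $q$ in the ball; you phrase it as a contradiction from $w\in A$. These are logically equivalent, and the paper's choice of hole radius $r$ together with its application of Lemma~\ref{distances} correspond to your $\lambda|t_n|$ and your error term $E(\lambda)$.

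You correctly isolate the delicate point: membership of $w$ in $A$ applied at the parameter $\tau_n=t_n\rho_{i_n+1}$ only yields the bound $\varepsilon|\tau_n|=\varepsilon|\rho_{i_n+1}|\,|t_n|$, and Lemma~\ref{ballboxtype} as stated gives no reason for $|\rho_{i_n+1}|<2$. Your subdivision idea is the right repair, but it cannot be applied \emph{post hoc}. If $A$ is kept fixed---so \eqref{UVbad} retains the threshold $2\varepsilon C_2$---and you merely refine the factorisation into $N'$ pieces with $|\rho_j'|<2$, then the pigeonhole behind Claim~\ref{claim1}(2) weakens: telescoping over $N'$ terms now only produces an index with error exceeding $(2\varepsilon C_2/N')|t_n|$, and once $N'>C_2$ your target inequality $\varepsilon|\rho_j'|+E(\lambda)<2\varepsilon C_2/N'$ can fail outright. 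The remedy must be made upstream: arrange Lemma~\ref{ballboxtype} from the outset to produce $|\rho_j|\le 1$ (your subdivision achieves this), let $C_2$ be the resulting bound on $N$, and \emph{define} $A$ using this $C_2$. Then Claim~\ref{claim1}(2) genuinely delivers $>2\varepsilon|t_n|$ and your contradiction $\varepsilon\cdot 1+E(\lambda)<2\varepsilon$ closes for small $\lambda$. Since $A$ is only an auxiliary construct inside the proof of Theorem~\ref{porosityderivatives}, this change is harmless---but it is a change to how $A$ is \emph{defined}, not merely an enlargement of a ``working constant'' inside the present argument.
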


\begin{proof}
We may assume that $U$ and $V$ are linearly independent, otherwise $A(U,V,y,z,\varepsilon,\delta)=\varnothing$ and the claim is trivial. Use Claim \ref{claim1}(2) and the equality $x_{i+1}=x_{i}\exp(t\rho_{i+1}U_{i+1})$ to choose $0\leq i\leq N-1$ satisfying:
\begin{equation}\label{touse} |f(x_{i}\exp(t\rho_{i+1}U_{i+1}))-f(x_{i})-t\rho_{i+1}y_{i+1}|>2\varepsilon |t|.\end{equation}
Recall the constant $C_{1}>0$ defined in Lemma \ref{distances}. Let
\[\Lambda= 2\mathrm{Lip}(f)C_{1}C_{2}\max(\omega(U),\omega(V)),\]
\begin{equation}\label{deflambda}
\lambda=\min \left( \frac{\varepsilon^{s}}{\Lambda^s}, \,  \frac{\varepsilon}{2\mathrm{Lip}(f)}, \, 1\right),
\end{equation}
and
\begin{equation}\label{defr}
r= \min (\lambda \omega(U),\, \lambda \omega(V),\, \lambda) |t|.
\end{equation}
Fix $q\in B(x_{i}, r)$. We will apply Lemma \ref{distances} with parameters:
\begin{itemize}
\item $\lambda$ as defined in \eqref{deflambda},
\item $t$ replaced by $t \rho_{i+1} \omega(U_{i+1})$,
\item $x$ replaced by $x_{i}$,
\item $y$ replaced by $q$,
\item $U$ replaced by $U_{i+1}/\omega(U_{i+1})$.
\end{itemize}
To see the hypotheses of Lemma \ref{distances} are satisfied, notice:
\begin{itemize}
\item \eqref{tsmall} and the bound $|\rho_{i+1}|\leq C_{2}$ implies $|t \rho_{i+1} \omega(U_{i+1})|<1$,
\item $0<\lambda<1$ is clear from the definition in \eqref{deflambda},
\item $d(x_{i},q)\leq \lambda |t|$ using $q\in B(x_{i}, r)$ and the definition of $r$ in \eqref{defr}.
\end{itemize}
Also note $\omega(U_{i+1}/\omega(U_{i+1}))\leq 1$ and recall $|\rho_{i+1}|\leq C_{2}$. Applying Lemma \ref{distances} gives:
\begin{align*}
d(x_{i}\exp (t\rho_{i+1}U_{i+1}), q\exp (t\rho_{i+1}U_{i+1})) &\leq C_{1}\lambda^{1/s}|t \rho_{i+1}|  \omega(U_{i+1}) \\
&\leq C_{1}C_{2}|t|\omega(U_{i+1})\lambda^{1/s} \\
&\leq C_{1}C_{2}|t|\omega(U_{i+1})\varepsilon / \Lambda \\
&\leq \varepsilon |t|/(2\mathrm{Lip}(f)).
\end{align*}
Hence:
\begin{equation}\label{firstone}
|f(x_{i}\exp(t\rho_{i+1}U_{i+1})) - f(q\exp(t\rho_{i+1}U_{i+1}))|\leq \varepsilon |t|/2.
\end{equation}
Since $q\in B(x_i, r)$ we can also estimate:
\begin{equation}\label{secondone}
|f(x_{i})-f(q)| \leq \mathrm{Lip}(f) r \leq \mathrm{Lip}(f) \lambda |t| \leq \varepsilon |t|/2.
\end{equation}
Applying \eqref{touse} together with \eqref{firstone} and \eqref{secondone} gives:
\begin{equation}\label{mar8} |f(q\exp(t\rho_{i+1}U_{i+1}))-f(q)-t\rho_{i+1}y_{i+1}|>\varepsilon |t|.\end{equation}
Recall that either $U_{i+1}=U$ and $y_{i+1}=y$ or $U_{i+1}=V$ and $y_{i+1}=z$. Hence \eqref{mar8} shows that \eqref{Ugood} or \eqref{Vgood} fails with $x$ replaced by $q$. Consequently $q\notin A(U,V,y,z,\varepsilon,\delta)$. Since $q$ was an arbitrary member of $B(x_{i}, r)$, we deduce:
\[B(x_{i}, r)\cap A(U,V,y,z,\varepsilon,\delta)=\varnothing.\]
Since $N\leq C_{2}$ and $|\rho_{i}|\leq C_{2}$, we can estimate as follows: 
\begin{align*}
d(x,x_{i})&\leq d(x_{0}, x_{1})+ \ldots + d(x_{i-1},x_{i})\\
&\leq  \omega(t\rho_{1}U_{1}) + \ldots + \omega(t \rho_{N}U_{N})\\
&\leq  |t||\rho_{1}| \omega(U_{1}) + \ldots + |t||\rho_{N}|\omega(U_{N})\\
&\leq C_{2}^{2}\max (\omega(U), \omega(V)) |t|.
\end{align*}
Since $r=\min (\lambda \omega(U),\, \lambda \omega(V),\, \lambda) |t|$, $B(x_{i}, r)$ is a relatively large hole in $A(U,V,y,z,\varepsilon,\delta)$ close to $x$. Since $t$ can be chosen arbitrarily small, it follows that $A(U,V,y,z,\varepsilon,\delta)$ is porous at $x$. The point $x$ was chosen arbitrarily from $A(U,V,y,z,\varepsilon,\delta)$, so the claim is proven.
\end{proof}

\begin{proof}[Proof of Theorem \ref{porosityderivatives}]
Fix a countable dense set $W\subset V_{1}$. Let $A$ be the countable union of sets $A(U,V,y,z,\varepsilon,\delta)$, with $U, V \in W$ and $y, z, \varepsilon, \delta \in \mathbb{Q}$ such that $\varepsilon, \delta>0$. Clearly $A$ is $\sigma$-porous.

Suppose $x\in \mathbb{G}\setminus A$ and $E_{1}f(x)$, $E_{2}f(x)$ exist. Choose rational $\varepsilon>0$ and rational $\delta>0$ such that for all $|t|<\delta$:
\[|f(x\exp(tE_{1}))-f(x)-tE_{1}f(x)|\leq \varepsilon |t|/2\]
and
\[|f(x\exp(tE_{2}))-f(x)-tE_{2}f(x)|\leq \varepsilon |t|/2.\]
Fix $y, z\in \mathbb{Q}$ such that $|y-E_{1}f(x)|< \varepsilon/4$ and $|z-E_{2}f(x)|<\varepsilon/4$. Choose $F_{1}, F_{2}\in W$ such that
\begin{align*}
d(\exp(E_{1}),\exp(F_{1})) &<\varepsilon/4\mathrm{Lip}(f),\\
d(\exp(E_{2}),\exp(F_{2})) &<\varepsilon/4\mathrm{Lip}(f),
\end{align*}
and
\[d(\exp(E_{1}+E_{2}),\exp(F_{1}+F_{2})) \leq \varepsilon.\]

Analogues of \eqref{Ugood} and \eqref{Vgood} hold with $E_{1}$ replaced by $F_{1}$ and $E_{2}$ replaced by $F_{2}$. For the first one notice that for any $|t|<\delta$:
\begin{align*}
|f(x\exp tF_{1})-f(x)-ty| &\leq |f(x\exp tE_{1})-f(x)-tE_{1}f(x)|\\
&\qquad + |f(x\exp tE_{1})-f(x\exp tF_{1})|\\
&\qquad +|ty-tE_{1}f(x)|\\
&\leq \varepsilon |t|/2 + \mathrm{Lip}(f)|t|d(\exp(E_{1}),\exp(F_{1}))\\
&\qquad +|t| |y-E_{1}f(x)|\\
&\leq \varepsilon|t|.
\end{align*} 
The second estimate is similar. Since $x\in \mathbb{G}\setminus A$, necessarily $x \notin A(F_{1},F_{2},y,z,\varepsilon,\delta)$. However, \eqref{Ugood} and \eqref{Vgood} hold, so necessarily \eqref{UVbad} fails. Hence, for all sufficiently small $t$,
\[|f(x\exp(tF_{1}+tF_{2}))-f(x)-t(y+z)|\leq 2\varepsilon C_{1}|t|.\]
This implies that for sufficiently small $t$:
\begin{align*}
& |f(x\exp(tE_{1}+tE_{2}))-f(x)-t(E_{1}f(x)+E_{2}f(x))|\\
& \leq |f(x\exp(tF_{1}+tF_{2}))-f(x)-t(y+z)|\\
&\qquad + |t|\mathrm{Lip}(f)d(\exp(E_{1}+E_{2}),\exp(F_{1}+F_{2}))\\
&\qquad + |t| |E_{1}f(x)-y| + |t| |E_{2}f(x)-z|\\
&\leq 2\varepsilon C_{1}|t| + |t|\mathrm{Lip}(f)\varepsilon + |t|\varepsilon/2\\
&\leq (2C_{1}+\mathrm{Lip}(f)+1/2)\varepsilon |t|.
\end{align*}
This implies that $(E_{1}+E_{2})f(x)$ exists and is equal to $E_{1}f(x)+E_{2}f(x)$, which proves the theorem.
\end{proof}

\section{Regularity and differentiability}

For Lipschitz maps between Euclidean spaces, existence and linear action of directional derivatives implies differentiability (by definition). The following example shows that for a Lipschitz map $f\colon \mathbb{G}\to \mathbb{R}$, existence and linear action of directional derivatives (in only horizontal directions) does not suffice for Pansu differentiability.

\begin{example}\label{linearnotdiff}
Let $\mathbb{H}^{1}\equiv \mathbb{R}^3$ be the first Heisenberg group. The Heisenberg group is the simplest example of non trivial Carnot group of step $2$. The group law has the form
\[
(x,y,t)(x',y',t')=\Big(x+x', y+y', t+t'+\frac{1}{2}(xy'-x'y)\Big).
\]
The homogeneous norm defined in \eqref{auxnorm} reads as
\[ \| (x,y,t)\| = ((x^2+y^2)^2 + t^2)^{1/4},\quad (x,y,t)\in \mathbb{H}^1\]
and $\tilde{d}(a,b)=\|a^{-1}b\|$ defines a metric on $\mathbb{H}^{1}$.
Let
\[ A= (\mathbb{R}^{2}\times \{0\})\cup (\{(0,0)\}\times \mathbb{R})\subset \mathbb{H}^{1},\]
equipped with the restriction of the metric $d$ on $\mathbb{H}^{1}$. Define $f\colon A\to \mathbb{R}$ by $f(x,y,0)=0$ and $f(0,0,t)=\sqrt{|t|}$. We claim that $f$ is Lipschitz with respect to restriction of the metric $\tilde{d}$. Clearly, it suffices to prove that there exists $C>0$ such that
\begin{equation}\label{ineq1}
|f(x,y,0)-f(0,0,t)|\leq C \tilde{d}((x,y,0),(0,0,t))\quad \mbox{ for all } x,y,t\in\mathbb{R}
\end{equation}
and
\begin{equation}\label{ineq2}
|f(0,0,t)-f(0,0,s)|\leq C \tilde{d}((0,0,t),(0,0,s))\quad \mbox{ for all } t,s\in\mathbb{R}
\end{equation}
Using the definition of the group law and $\tilde{d}$:
\begin{align*}
\tilde{d}((x,y,0),(0,0,t)) =\|(-x,-y,t)\| \geq c \big(\|(x,y)\|_{\mathbb{R}^2}+ \sqrt{|t|}\big)
\end{align*}
and
\[|f(x,y,0)-f(0,0,t)|= \sqrt{|t|} \leq \frac{1}{c} \tilde{d}((x,y,0),(0,0,t))\]
so \eqref{ineq1} follows.
Moreover,
\[
|f(0,0,t)-f(0,0,s)|=\left|\sqrt{|t|}-\sqrt{|s|}\right| \leq \sqrt{|t-s|}= \tilde{d}((0,0,t),(0,0,s)).
\]
By the classical McShane extension theorem (see for example \cite[Theorem 6.2]{Hei01}), $f$ admits an extension to a Lipschitz function $f\colon \mathbb{H}^{1}\to \mathbb{R}$, note that by \eqref{equivdist}, $f$ is Lipschitz also with respect to the Carnot-Carath\'eodory distance $d$. Clearly $Ef(0,0,0)=0$ for all $E\in V_{1}$, so $f$ has all directional derivatives and they act linearly at $(0,0,0)$. Indeed, recalling that $\exp(tE)\in \mathbb{R}^2\times\{0\}$, we have:
\[
Ef(0,0,0)=\lim_{t\to 0}\frac{f(\exp(tE))-f(0,0,0)}{t}=0.
\]
However, $f$ is not Pansu differentiable at $(0,0,0)$. Indeed:
\[ \lim_{t \to 0} \frac{|f(0,0,t^2)-f(0,0,0)|}{d((0,0,t^2),(0,0,0))} = \lim_{t \to 0} \frac{|t|}{|t|}=1>0.\]
\end{example}

In this section we adapt \cite{LP03} to define regular points of Lipschitz functions on Carnot groups and show that the set of irregular points of a Lipschitz function is $\sigma$-porous (Proposition \ref{irregularporous}). We then show that outside another $\sigma$-porous set, existence of directional derivatives (in horizontal directions) implies Pansu differentiability (Theorem \ref{differentiableatlast} and Corollary \ref{directionaltodiff}). This then yields a new proof of Pansu's theorem (Corollary \ref{Pansucor}).

\subsection{Regular points}

We adapt \cite[Definition 3.1]{LP03} from Banach spaces to define regular points for Lipschitz maps on Carnot groups. Intuitively, $x$ is a regular point of $f$ if whenever $E\in V_{1}$ and $Ef(x)$ exists, then $Ef(x)$ controls changes in $f$ along all lines close to $x$ in direction $E$.

\begin{definition}\label{defregularpoint}
Suppose $f\colon \mathbb{G}\to \mathbb{R}$. We say that $x\in \mathbb{G}$ is a \emph{regular point} of $f$ if for every $E\in V_{1}$ for which $Ef(x)$ exists, 
\[\lim_{t\to 0} \frac{f(x\delta_{t}(u)\exp(tE))-f(x\delta_{t}(u))}{t}=Ef(x)\]
uniformly for $d(u)\leq 1$. A point is \emph{irregular} if it is not regular.
\end{definition}

We adapt \cite[Proposition 3.3]{LP03} from Banach spaces to show that irregular points of a Lipschitz function form a $\sigma$-porous set.

\begin{proposition}\label{irregularporous}
Let $f\colon \mathbb{G}\to \mathbb{R}$ be Lipschitz. Then the set of irregular points of $f$ is $\sigma$-porous.
\end{proposition}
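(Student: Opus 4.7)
My plan is to adapt the Banach-space argument of \cite[Proposition 3.3]{LP03}, with the translation $q\mapsto q\delta_{t}(u)$ playing the role of straight-line perturbation and Lemma \ref{distances} replacing the triangle inequality that would otherwise control $d(q\exp(tE),q'\exp(tE))$ by $d(q,q')$. First I would fix a countable dense set $W\subset V_{1}$ and, for each $E\in W$, rational $y$, and positive rationals $\varepsilon,\delta$, define $A(E,y,\varepsilon,\delta)$ as the set of $x\in\mathbb{G}$ such that (i) $|f(x\exp(tE))-f(x)-ty|\leq\varepsilon|t|$ for every $|t|<\delta$, and (ii) for every $n\in\mathbb{N}$ there exist $|t|<\min(\delta,1/n)$ and $u$ with $d(u)\leq 1$ satisfying $|f(x\delta_{t}(u)\exp(tE))-f(x\delta_{t}(u))-ty|>4\varepsilon|t|$. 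I would then show that the countable union of these sets contains every irregular point: given an irregular $x$, pick $E_{0}\in V_{1}$ with $E_{0}f(x)$ existing and the uniformity failing at some margin $\varepsilon_{0}>0$, approximate $E_{0}$ in $\omega$-norm by $E\in W$ and $E_{0}f(x)$ by a rational $y$, and choose rationals $\varepsilon<\varepsilon_{0}/8$ and $\delta$ small. The Lipschitz bound $|f(z\exp(tE))-f(z\exp(tE_{0}))|\leq\mathrm{Lip}(f)|t|\omega(E-E_{0})$, applied at $z=x$ for clause (i) and at $z=x\delta_{t}(u)$ for clause (ii), transfers both the approximate derivative property and the witness points from $(E_{0},E_{0}f(x))$ to $(E,y)$; the factor $4$ in clause (ii) absorbs these perturbations.

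The key step is proving each $A=A(E,y,\varepsilon,\delta)$ is porous. Fix $x\in A$. For any $n\in\mathbb{N}$, clause (ii) supplies $t$ and $u$ with $d(u)\leq 1$ witnessing the lower bound — note $u\neq 0$, else clause (ii) would contradict clause (i) — and I set $x_{t}:=x\delta_{t}(u)$, so $d(x,x_{t})=|t|d(u)\in(0,|t|]$ and $x_{t}\to x$ as $n\to\infty$. I would then choose
\[ \lambda=\min\!\left(1,\frac{\varepsilon^{s}}{(4\mathrm{Lip}(f)C_{1}\max(1,\omega(E)))^{s}},\frac{\varepsilon}{4\mathrm{Lip}(f)}\right), \]
where $C_{1}$ is the constant from Lemma \ref{distances}, and crucially $\lambda$ depends only on the parameters of $A$. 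For $q\in B(x_{t},\lambda|t|)$, Lemma \ref{distances} gives $d(x_{t}\exp(tE),q\exp(tE))\leq C_{1}\lambda^{1/s}|t|\max(1,\omega(E))$, so Lipschitz continuity yields $|f(x_{t}\exp(tE))-f(q\exp(tE))|+|f(x_{t})-f(q)|\leq\varepsilon|t|/2$. Combined with the witness inequality at $x_{t}$, the reverse triangle inequality produces $|f(q\exp(tE))-f(q)-ty|>\varepsilon|t|$, contradicting clause (i) for $q$. Hence $B(x_{t},\lambda|t|)\cap A=\varnothing$, and since $d(x,x_{t})\leq|t|$ this gives $B(x_{t},\lambda d(x,x_{t}))\cap A=\varnothing$, proving porosity with a constant independent of $x$.

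The main technical obstacle, shared with the proof of Theorem \ref{porosityderivatives}, is that a time-$t$ horizontal step applied to two points at distance $\sim\lambda|t|$ separates them only to distance $\sim\lambda^{1/s}|t|$, not $\sim\lambda|t|$. I will absorb this by taking $\lambda$ a polynomial rather than linear function of $\varepsilon$, which forces the formula above; this is the characteristic cost of working in a non-abelian group. The one other subtlety worth mentioning is the restriction $d(u)\leq 1$ in Definition \ref{defregularpoint}: it is precisely what guarantees $d(x,x_{t})\leq|t|$, so that a hole of absolute radius $\lambda|t|$ at $x_{t}$ qualifies as a hole of relative radius $\lambda$ at $x$, as porosity demands.
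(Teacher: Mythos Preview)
Your argument is essentially the paper's own proof: the same decomposition into sets indexed by a dense direction $E$, a rational target value, and rational precision/scale parameters; the same hole placed at $x\delta_t(u)$ with radius $\lambda|t|$; and the same use of Lemma~\ref{distances} forcing $\lambda$ to be of order $\varepsilon^{s}$ rather than $\varepsilon$. Your write-up is in fact more explicit than the paper's on two points the paper leaves to the reader --- the covering of every irregular point by some $A(E,y,\varepsilon,\delta)$, and the observation that $u\neq 0$ so that $x_t\neq x$.

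One small correction in the covering step: the inequality $|f(z\exp(tE))-f(z\exp(tE_{0}))|\leq\mathrm{Lip}(f)\,|t|\,\omega(E-E_{0})$ is false in a non-abelian Carnot group. Left invariance and dilation give $d(z\exp(tE),z\exp(tE_0))=|t|\,d(\exp E,\exp E_0)$, but $d(\exp E,\exp E_0)$ is typically of order $\omega(E-E_0)^{1/s}$, not $\omega(E-E_0)$ (already in $\mathbb{H}^1$: take $E_0=X$, $E=X+\epsilon Y$ and compute $\exp(-E_0)\exp(E)=(0,\epsilon,-\epsilon/2)$, whose homogeneous norm is $\sim\sqrt{\epsilon}$). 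The fix is immediate --- replace $\omega(E-E_0)$ by $d(\exp E,\exp E_0)$ throughout and choose $E\in W$ close to $E_0$ in that sense, which is possible since $\exp$ is continuous and $W$ is dense; this is exactly what the paper does in the analogous step of Theorem~\ref{porosityderivatives}. With that adjustment your proof is correct and matches the paper's.
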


\begin{proof}
Let $\mathcal{J}$ be a countable dense subset of $V_{1}$. For $p,q\in \mathbb{N}$, $E \in \mathcal{J}$ and $w\in \mathbb{Q}$, let $A_{p,q,E,w}$ be the set of $x\in \mathbb{G}$ such that
\begin{equation}\label{tocontradict} |f(x\exp tE)-f(x)-tw| \leq |t|/p \mbox{ for}\  |t|<1/q\end{equation}
and
\[\limsup_{t\to 0} \sup_{d(u)\leq 1} \left| \frac{f(x\delta_{t}(u)\exp(tE))-f(x\delta_{t}(u))}{t} - w\right| > 2/p.\]
Fix $p,q,E,w$ and $x\in A_{p,q,E,w}$. Then there are arbitrarily small $|t|<1/q$ such that for some $u\in \mathbb{G}$ with $d(u)\leq 1$, depending on $t$,
\begin{equation}\label{mar8pm}|f(x\delta_{t}(u)\exp(tE))-f(x\delta_{t}(u))-tw|>2|t|/p.\end{equation}
Let
\[\lambda=\min \left( \frac{1}{(2C_{1}p\mathrm{Lip}(f)(1+\omega(E)))^{s}},\, \frac{1}{2p\mathrm{Lip}(f)},\, 1\right),\]
where $C_{1}$ is the constant from Lemma \ref{distances}. Fix $|t|<1/q$ and $u\in \mathbb{G}$ with $d(u)\leq 1$ satisfying \eqref{mar8pm}. Suppose $y\in B(x\delta_{t}(u),\lambda|t|)$. We may apply Lemma \ref{distances}, with $U$ replaced by $E$ and $x$ replaced by $x\delta_{t}(u)$, to obtain:
\begin{align*}
|f(y\exp(tE))-f(x\delta_{t}(u)\exp(tE))| &\leq \mathrm{Lip}(f)d(y\exp(tE), x\delta_{t}(u)\exp(tE))\\
&\leq \mathrm{Lip}(f) C_{1} \lambda^{1/s}|t| \max (1, \omega(E))\\
&\leq |t|/2p.
\end{align*}
Hence, using also \eqref{mar8pm} and the assumption $d(y,x\delta_{t}(u))<\lambda |t|$,
\begin{align*}
|f(y\exp(tE))-f(y)-tw| &\geq |f(x\delta_{t}(u)\exp(tE))-f(x\delta_{t}(u))-tw|\\
&\qquad -|f(y\exp(tE))-f(x\delta_t(u)\exp(tE))|\\
&\qquad \qquad -|f(x\delta_t(u))-f(y)|\\ 
&> |t|/p.
\end{align*}
Since $|t|<1/q$ the previous estimate contradicts \eqref{tocontradict}, so $y\notin A_{p,q,E,w}$. To summarise:
\[d(x,x\delta_{t}(u))=d(\delta_{t}(u))\leq |t|\]
and we have shown:
\[B(x\delta_{t}(u),\lambda|t|)\cap A_{p,q,E,w}=\varnothing.\]
Since $|t|$ could be chosen arbitrarily small, this shows $A_{p,q,E,w}$ is porous. Every irregular point of $f$ belongs to one of the countable collection of porous sets $A_{p,q,E,w}$, where $p,q\in \mathbb{N}$, $E \in \mathcal{J}$ and $w\in \mathbb{Q}$. Hence the set of irregular points of $f$ is $\sigma$-porous, as desired.
\end{proof}

\subsection{Differentiability and Pansu's theorem}

To pass from directional derivatives to Pansu's theorem, we use a lemma showing how to join arbitrary points by following directions from our chosen basis $X_{1}, \ldots, X_{m}$ of $V_{1}$. To prove our desired statement we first quote the following lemma \cite[Theorem 19.2.1]{BLU}.

\begin{lemma}\label{magiclemma}
Let $\mathcal{Z}=\{Z_{1}, \ldots, Z_{m}\}$ be a basis for $V_{1}$ and fix a homogeneous norm $\rho$ on $\mathbb{G}$. Then there exist constants $M\in \mathbb{N}$ (depending only on $\mathbb{G}$) and $c_{0}>0$ (depending on $\mathbb{G}$, $\rho$ and $\mathcal{Z}$) for which the following holds.

For every $x\in \mathbb{G}$, there exist $x_{1}, \ldots, x_{M}\in \exp(V_{1})$ with the following properties:
\begin{itemize}
\item $x=x_{1}\cdots x_{M}$,
\item $\rho(x_{j})\leq c_{0}\rho(x)$ for all $j=1, \ldots, M$,
\item for every $j=1,\ldots, M$, there exist $t_{j}\in \mathbb{R}$ and $i_{j}\in \{1, \ldots, m\}$ such that $x_{j}=\exp(t_{j}Z_{i_{j}})$.
\end{itemize}
\end{lemma}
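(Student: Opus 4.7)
The strategy has two layers: a local surjectivity result near the identity, plus a reduction by dilation. Since $Z_i \in V_1$ forces $\delta_\lambda(\exp(tZ_i)) = \exp(\lambda t Z_i)$ and $\rho$ is homogeneous of degree one, it suffices to show that every point $y$ on the unit $\rho$-sphere can be written in the form $y = \exp(s_1 Z_{i_1})\cdots \exp(s_M Z_{i_M})$ with $M$ and $|s_j|$ controlled by $\mathbb{G}$, $\rho$, and $\mathcal{Z}$ alone. Applying such a representation to $\delta_{1/\rho(x)}(x)$ and dilating back by $\delta_{\rho(x)}$ turns each factor into $\exp(\rho(x) s_j Z_{i_j})$, whose $\rho$-norm is at most $(\max_k |s_k|)(\max_l \rho(\exp(Z_l)))\rho(x)$; this defines $c_0$.

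To obtain the local representation, fix a basis $\{B_1, \ldots, B_n\}$ of $\mathfrak{g}$ in which each $B_i$ is a specific iterated bracket of the $Z_j$'s whose length $k_i$ equals the homogeneity of the layer containing $B_i$. The smooth map
\[\Psi(\sigma_1, \ldots, \sigma_n) := \exp(\sigma_1 B_1)\cdots \exp(\sigma_n B_n)\]
satisfies $d\Psi(0)(e_i) = B_i$ by the BCH formula \eqref{BCH}, so $\Psi$ is a local diffeomorphism between a neighborhood of $0 \in \mathbb{R}^n$ and a neighborhood of $0 \in \mathbb{G}$. Every point close to $0$ therefore admits a factorization as a product of terms $\exp(\sigma_i B_i)$ with bounded parameters $|\sigma_i|$. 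To convert each such factor into a product of exponentials of $\pm Z_j$'s, I invoke Lemma \ref{diamondlemma}: writing $t = |\sigma_i|^{1/k_i}$ and using multilinearity of the Lie bracket, $\sigma_i B_i$ equals a $k_i$-fold nested commutator of $\pm t Z_{j_l}$'s, which the lemma rewrites as $(\varepsilon_1 t Z_{j_{l_1}})\diamond \cdots \diamond (\varepsilon_c t Z_{j_{l_c}})$ plus a remainder $R$ that is a linear combination of brackets of length $> k_i$.

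The main obstacle is this remainder $R$, which lies in $V_{k_i+1}\oplus \cdots \oplus V_s$ and vanishes only in the deepest case $k_i = s$. I would cancel it by an inductive correction scheme running downward through the layers $V_s, V_{s-1}, \ldots, V_1$: at the bottom layer Lemma \ref{diamondlemma} is exact, and at each subsequent stage the residual errors lie in strictly deeper layers that have already been represented, so they can be absorbed by perturbing the parameters of the earlier factors and appending a bounded number of additional $Z_j$-flows. The finiteness of the step $s$ guarantees that this process terminates in a bounded number of rounds without cascading the parameter sizes. Compactness of the closed unit ball then promotes the local-in-$y$ statement to a uniform constant $C$ valid on the whole unit sphere, yielding the universal $M$ and $c_0$. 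The bulk of the technical work lies in the bookkeeping required by this downward correction.
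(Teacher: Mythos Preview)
The paper does not prove this lemma; it is quoted verbatim as \cite[Theorem 19.2.1]{BLU}, so there is no in-paper argument to compare against. Your sketch follows the standard route taken in that reference: a homogeneity reduction, a local surjectivity statement built from products of exponentials of iterated brackets, and Lemma~\ref{diamondlemma} to rewrite each bracket factor as a $\diamond$-product of the $Z_j$.

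One step is not yet closed. The diffeomorphism $\Psi$ covers only a neighborhood $U$ of the identity, and nothing you have written produces a representation ``local in $y$'' at points of the unit sphere away from $0$; compactness of the ball therefore has nothing to act on. The correct bridge is a second use of dilation, not compactness: once your correction scheme yields $y=\prod_j\exp(s_jZ_{i_j})$ with fixed $M$ and $|s_j|\le C$ for all $y\in B_\rho(0,\varepsilon)\subset U$, apply it to $\delta_{\varepsilon/\rho(x)}(x)$ and dilate back by $\rho(x)/\varepsilon$. Because each factor lies in $\exp(V_1)$, the dilation multiplies every parameter $s_j$ by the same scalar $\rho(x)/\varepsilon$, giving the global bound $c_0$ directly. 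With this adjustment (which also makes the preliminary reduction to the unit sphere redundant) your outline is sound; the downward layer-by-layer correction works because the remainders land in strictly deeper strata, and elements of $V_s$ are central so the process terminates after at most $s$ rounds, exactly as you indicate.
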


\begin{lemma}\label{pathlemma}
There are constants $M\in \mathbb{N}$ and $Q>0$ such that the following holds for every point $h\in \mathbb{G}$. For $1\leq j\leq M$, there exist $t_{j} \geq 0$ and $E_{j}\in \{\pm X_{1}, \ldots, \pm X_{m}\}$ such that:
\begin{enumerate}
\item $\sum_{j=1}^{M} t_{j} \leq Qd(h)$,
\item $h=\exp(t_{1}E_{1}) \cdots \exp(t_{M}E_{M})$.
\end{enumerate}
\end{lemma}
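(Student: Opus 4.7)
The plan is to derive Lemma \ref{pathlemma} almost directly from Lemma \ref{magiclemma}. I would apply the latter with the adapted basis $\mathcal{Z} = \{X_1, \ldots, X_m\}$ of $V_1$ (already fixed throughout the paper) and the homogeneous norm $\rho = d$. This produces an integer $M \in \mathbb{N}$ depending only on $\mathbb{G}$, a constant $c_0 > 0$ depending only on $\mathbb{G}$ (since $\rho$ and $\mathcal{Z}$ are fixed), and a decomposition
\[ h = x_1 \cdots x_M, \qquad x_j = \exp(s_j X_{i_j}), \quad s_j \in \mathbb{R}, \quad i_j \in \{1, \ldots, m\},\]
with $d(x_j) \leq c_0 \, d(h)$ for every $1 \leq j \leq M$.

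To match the exact form required by the statement, I would convert the (possibly negative) scalars $s_j$ into nonnegative coefficients paired with signed basis vectors by setting $t_j := |s_j| \geq 0$ and $E_j := \mathrm{sign}(s_j)\, X_{i_j} \in \{\pm X_1, \ldots, \pm X_m\}$. Then $\exp(t_j E_j) = \exp(s_j X_{i_j}) = x_j$, so substituting into the decomposition above yields conclusion (2) of the lemma immediately.

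It only remains to bound $\sum_j t_j$ by a constant multiple of $d(h)$. For this I would invoke the auxiliary homogeneous norm $\|\cdot\|$ from \eqref{auxnorm}: since $x_j = \exp(s_j X_{i_j})$ is identified in exponential coordinates with the point $s_j e_{i_j} \in \mathbb{R}^n$ lying entirely in the first layer, the defining formula for $\|\cdot\|$ collapses to $\|x_j\| = |s_j|$. Combining this with the equivalence \eqref{equivdist} and the estimate from Lemma \ref{magiclemma} gives
\[ |s_j| \;=\; \|x_j\| \;\leq\; c \, d(x_j) \;\leq\; c\,c_0\, d(h), \]
so summing over $j$ yields $\sum_{j=1}^M t_j \leq Mcc_0\, d(h)$, and the choice $Q := Mcc_0$ completes the proof. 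No serious obstacle is anticipated: the essential geometric content is packaged inside Lemma \ref{magiclemma}, and the rest is a cosmetic rewriting of signed scalars together with the observation that on pure first-layer elements the homogeneous norm \eqref{auxnorm} reduces to the Euclidean norm of the coefficients.
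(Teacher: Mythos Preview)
Your proposal is correct and follows essentially the same route as the paper: apply Lemma \ref{magiclemma} with $\mathcal{Z}=\{X_1,\ldots,X_m\}$ and $\rho=d$, then absorb the signs of the scalars into $\pm X_{i_j}$. The only difference is cosmetic: to extract $|s_j|$ from the bound $d(x_j)\leq c_0 d(h)$, the paper uses directly that $d(\exp(t X_i))=|t|$ when $\omega(X_i)=1$, whereas you pass through the auxiliary norm $\|\cdot\|$ and \eqref{equivdist}; both are valid and yield the same structure with slightly different constants.
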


\begin{proof}
Apply Lemma \ref{magiclemma} with the basis $\mathcal{Z}=\{X_{1}, \ldots, X_{m}\}$ of $V_{1}$ and homogeneous norm $x\mapsto d(x)$ on $\mathbb{G}$. We obtain $M\in \mathbb{N}$ such that for every $h\in \mathbb{G}$, there exist $t_{j}\in \mathbb{R}$ and $i_{j}\in \{1, \ldots, m\}$ for $1\leq j\leq M$, such that
\begin{equation}\label{toref1}
h=\exp(t_{1}X_{i_{1}})\cdots \exp(t_{M}X_{i_{M}}),
\end{equation}
and
\begin{equation}\label{toref2}
d(\exp t_{j}X_{i_{j}})\leq c_{0}d(h).
\end{equation}
Since $\omega(X_{i_{j}})=1$ for every $j$, \eqref{toref2} implies $|t_{j}|\leq c_{0}d(h)$ for $1\leq j\leq M$. Hence:
\begin{equation}\label{toref3}
\sum_{j=1}^{M} |t_{j}|\leq Mc_{0}d(h).
\end{equation}
For each $1\leq j\leq M$ choose $E_{j}=X_{i_{j}}$ if $t_{j}\geq 0$, or choose $E_{j}=-X_{i_{j}}$ and replace $t_{j}$ by $-t_{j}$ if $t_{j}<0$. After such a replacement we have $t_{j}\geq 0$ for every $j$, \eqref{toref1} still holds giving the desired equality (2), and \eqref{toref3} gives (1).
\end{proof}

We now prove a pointwise differentiability result, whose hypotheses combine the various conditions we have investigated so far.

\begin{theorem}\label{differentiableatlast}
Suppose $f\colon \mathbb{G}\to \mathbb{R}$ is Lipschitz and $x\in \mathbb{G}$ has the following properties:
\begin{itemize}
\item $X_{1}f(x), \ldots, X_{m}f(x)$ exist,
\item directional derivatives act linearly at $x$,
\item $x$ is a regular point of $f$.
\end{itemize}
Then $f$ is Pansu differentiable at the point $x$ with group linear derivative $L(\cdot)=\langle p(\cdot), \nabla_{H}f(x) \rangle$, where $p\colon \mathbb{G}\to \mathbb{R}^{m}$ is projection onto the first $m$ coordinates.
\end{theorem}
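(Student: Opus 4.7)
The plan is to show that $|f(xh) - f(x) - L(h)| = o(d(h))$ as $d(h) \to 0$. First I would verify that $L$ is group linear: since $V_1$ is the lowest stratum, no Baker-Campbell-Hausdorff correction affects the first $m$ coordinates of a product, giving $p(xy) = p(x) + p(y)$; and $p(\delta_r(x)) = r p(x)$ is immediate from the definition of the dilations. For $h \in \mathbb{G}$ with $r := d(h)$ small, I would apply Lemma \ref{pathlemma} to write $h = \exp(t_1 E_1) \cdots \exp(t_M E_M)$ with $t_j \geq 0$, $E_j \in \{\pm X_1, \ldots, \pm X_m\}$ and $\sum_j t_j \leq Q r$. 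Repeated application of \eqref{BCH} places all correction terms in $V_2 \oplus \cdots \oplus V_s$, so $p(h) = \sum_j t_j p(E_j)$; combined with linear action of directional derivatives at $x$ and the identity $p(X_j) = e_j$ this yields $L(h) = \sum_j t_j E_j f(x)$.

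Setting $y_0 = 0$ and $y_k = \exp(t_1 E_1) \cdots \exp(t_k E_k)$, I would then telescope
\[ f(xh) - f(x) - L(h) = \sum_{k=1}^{M} \big[ f(xy_{k-1}\exp(t_k E_k)) - f(xy_{k-1}) - t_k E_k f(x) \big] \]
and estimate each summand using the regularity of $x$. The key preliminary step is an \emph{extended regularity} statement: for each $R > 0$ and each $E \in \{\pm X_1, \ldots, \pm X_m\}$, the quotient $(f(x\delta_t(u)\exp(tE)) - f(x\delta_t(u)))/t$ converges to $Ef(x)$ uniformly for $d(u) \leq R$ as $t \to 0$. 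This follows by substituting $u = \delta_R(v)$ and $s = tR$, which gives $\delta_t(u) = \delta_s(v)$ and $\exp(tE) = \exp(s(E/R))$, and then applying Definition \ref{defregularpoint} for the direction $E/R$, whose derivative $Ef(x)/R$ at $x$ exists by linear action of directional derivatives.

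To estimate the $k$-th summand I would write $y_{k-1} = \delta_{t_k}(u_k)$ with $u_k = \delta_{1/t_k}(y_{k-1})$, so $d(u_k) \leq Q r/t_k$. Given $\varepsilon > 0$, I would split the indices by a threshold $\tau$ depending on $\varepsilon$: for those with $t_k \leq \tau r$ the Lipschitz bound directly controls both $|f(xy_{k-1}\exp(t_k E_k)) - f(xy_{k-1})|$ and $|t_k E_k f(x)|$ by $\mathrm{Lip}(f)\, \tau r$; for those with $t_k > \tau r$ we have $d(u_k) \leq Q/\tau$, so the extended regularity applies with $R = Q/\tau$ to give a contribution of order $\varepsilon t_k$ once $r$ is small enough. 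Summing over the at most $M$ terms, with $\tau$ chosen proportional to $\varepsilon$ and then $r$ sufficiently small, yields the desired $o(d(h))$ bound and hence Pansu differentiability of $f$ at $x$ with derivative $L$.

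The principal obstacle is the mismatch of scales: the intermediate points $y_{k-1}$ live at distance of order $r$ from the origin, while each increment $\exp(t_k E_k)$ has its own scale $t_k$, which can lie anywhere between $0$ and $Q r$. Definition \ref{defregularpoint} ties the scale of $u$ to the scale of $t$, so bridging this mismatch requires both the linear action of directional derivatives (to rescale the direction $E$) and a rescaling of the dilation parameter; this is exactly what the extended regularity step accomplishes, after which the remaining computations are routine.
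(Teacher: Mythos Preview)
Your argument is correct. Both your proof and the paper's follow the same overall strategy---decompose $h$ via Lemma \ref{pathlemma}, telescope, and control each summand through regularity---but they resolve the scale mismatch you identify differently. You absorb the mismatch into the base point: writing $y_{k-1}=\delta_{t_k}(u_k)$ forces $d(u_k)\leq Qr/t_k$, which is unbounded, so you introduce the threshold $\tau$ and an ``extended regularity'' at radius $R=Q/\tau$ (itself obtained by rescaling back to radius $1$ with direction $E/R$). The paper instead absorbs the mismatch into the direction: it fixes a single scale $t=Qd(h)$ for every index, writes $x_i=\delta_t(u)$ with $d(u)\leq 1$ and $t_iE_i=tE$ with $E=(t_i/t)E_i$, and since $t_i\leq t$ the rescaled direction lies in the unit ball $\mathcal{J}=\{E\in V_1:\omega(E)\leq 1\}$. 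A short compactness argument then makes the regularity estimate \eqref{reguse} uniform over $E\in\mathcal{J}$, giving each summand the clean bound $Q\varepsilon d(h)$ with no threshold split. The paper's route is a bit shorter; yours has the minor advantage that regularity is only invoked for scalar multiples of the basis directions $\pm X_i$, so the hypothesis of linear action is used only for the (trivial) homogeneity $(E/R)f(x)=Ef(x)/R$, whereas the paper genuinely needs linear action to guarantee that $Ef(x)$ exists for every $E\in\mathcal{J}$ before regularity can be applied.
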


\begin{proof}
First we remark that $h\mapsto \langle p(h), \nabla_{H}f(x) \rangle$ is group linear. Indeed, the group operation is Euclidean in the first $m$ coordinates so:
\begin{align*}
\langle p(h_{1}h_{2}), \nabla_{H}f(x) \rangle &= \langle p(h_{1})+p(h_{2}), \nabla_{H}f(x) \rangle\\
&= \langle p(h_{1}), \nabla_{H}f(x) \rangle + \langle p(h_{2}), \nabla_{H}f(x) \rangle,
\end{align*}
and
\[\langle p(\delta_{r}(h)), \nabla_{H}f(x) \rangle= \langle rp(h), \nabla_{H}f(x) \rangle = r\langle p(h), \nabla_{H}f(x) \rangle\]
for any $h_{1}, h_{2}\in \mathbb{G}$ and $r>0$.

We may assume $x=0$. Fix $M$ and $Q$ as in Lemma \ref{pathlemma} and let $\varepsilon>0$. Since $X_{1}f(x), \ldots, X_{m}f(x)$ exist and directional derivatives act linearly at $x$, necessarily $Ef(x)$ exists for every $E\in V_{1}$. Since $0$ is a regular point of $f$, for each $E\in V_{1}$ we can find $\delta>0$ such that if $0<|t|<\delta$ and $d(u)\leq 1$ then:
\begin{equation}\label{reguse}|f(\delta_{t}(u)\exp(tE))-f(\delta_{t}(u))-tEf(0)|\leq \varepsilon |t|. \end{equation}
Since $\mathcal{J}:=\{E\in V_{1}: \omega(E)\leq 1\}$ is compact, we may additionally choose $\delta>0$ so that \eqref{reguse} holds uniformly if $0<|t|<\delta$, $d(u)\leq 1$ and $E\in\mathcal{J}$.

Fix $h\in \mathbb{G}\setminus\{0\}$ with $d(h)\leq \delta/Q$. Use Lemma \ref{pathlemma} to choose $t_{i}\geq 0$ and $E_{i}\in \{\pm X_{1}, \ldots, \pm X_{m}\}$ for $1\leq i\leq M$ such that 
\begin{equation}\label{L1}
\sum_{j=1}^{M} t_{j} \leq Qd(h)
\end{equation}
and
\begin{equation}\label{L2}
h=\exp(t_{1}E_{1}) \cdots \exp(t_{M}E_{M}).
\end{equation}

Let $x_{1}=0$ and $x_{i+1}=x_{i}\exp(t_{i}E_{i})$ for $1\leq i\leq M$. Notice $x_{M+1}=h$ follows from \eqref{L2}.
\begin{claim}
For each $1\leq i\leq M$:
\[|f(x_{i+1})-f(x_{i})-t_{i}E_{i}f(0)|\leq Q\varepsilon d(h).\]
\end{claim}

\begin{proof}
Fix $1\leq i\leq M$ and let $t=Qd(h)$. We check that there exist $u\in\mathbb{G}$ with $d(u)\leq 1$ and $E\in V_{1}$ with $\omega(E)\leq 1$ such that $x_{i}=\delta_{t}(u)$ and $t_{i}E_{i}=tE$.

We begin by solving $x_{i}=\delta_{t}(u)$. Notice
\[x_{i}=\exp(t_{1}E_{1})\cdots \exp(t_{i-1}E_{i-1}).\]
Using the definition of $x_{i}$, $\omega(E_{j})\leq 1$ and \eqref{L1}, we can estimate $d(x_{i})$ as follows:
\begin{align*}
d(x_{i}) \leq \sum_{j=1}^{i-1}d(x_{j},x_{j+1})&=\sum_{j=1}^{i-1} d(\exp(t_{j}E_{j}))\\
&\leq \sum_{j=1}^{i-1} t_{j}\omega(E_{j})\\
&\leq Qd(h),
\end{align*}
Since $t=Qd(h)$, we can choose $u\in \mathbb{G}$ with $d(u)\leq 1$ such that $x_{i}=\delta_{t}(u)$.

Since $h\neq 0$, we have $t\neq 0$. Let $E=(t_{i}/t)E$. Since $\omega(E_{i})=1$ and \eqref{L1} implies $t_{i}\leq Qd(h)$, we have
\[\omega(E)= t_{i}/t=t_{i}/Qd(h)\leq 1.\]
Hence $E\in V_{1}$ solves $tE=t_{i}E_{i}$ with $\omega(E)\leq 1$ as desired.

Since:
\begin{itemize}
\item $x_{i+1}=x_{i}\exp(t_{i}E_{i})$,
\item $x_{i}=\delta_{t}(u)$ and $t_{i}E_{i}=tE$,
\item $t=Qd(h)<\delta$,
\end{itemize}
we may apply inequality \eqref{reguse} to obtain:
\[|f(x_{i+1})-f(x_{i})-t_{i}E_{i}f(0)|\leq Q\varepsilon d(h).\]
This proves the claim.
\end{proof}

The group operation in the first $m$ coordinates is Euclidean, \eqref{L2} states $\exp(t_{1}E_{1})\cdots \exp(t_{M}E_{M})=h$, and $\exp(\theta E)=\theta E(0)$ whenever $\theta\in \mathbb{R}$ and $E\in V_{1}$. Hence:
\[ (t_{1}E_{1}(0)) \cdots (t_{M}E_{M}(0))=h.\]
Since $p(X_{i}(0))$ is the $i$'th standard basis vector of $\mathbb{R}^{m}$ in coordinates, we deduce:
\[p\left( \sum_{i=1}^{M} t_{i}E_{i}(0) \right) =p\left( \sum_{i=1}^{m} h_{i}X_{i}(0) \right).\]
Hence $\sum_{i=1}^{M} t_{i}E_{i}$ and $\sum_{i=1}^{m} h_{i}X_{i}$ are two horizontal vectors whose projections in $\mathbb{R}^{m}$ are equal at $0$. Consequently:
\[\sum_{i=1}^{M} t_{i}E_{i}=\sum_{i=1}^{m}h_{i}X_{i}.\]
Since directional derivatives act linearly at $0$, we deduce:
\[\sum_{i=1}^{M} t_{i}E_{i}f(0)=\sum_{i=1}^{m}h_{i}X_{i}f(0).\]
We then estimate as follows:
\begin{align*}
|f(h) - f(0) - \langle p(h), \nabla_{H}f(0) \rangle | &= \left|\sum_{i=1}^{M} (f(x_{i+1})-f(x_{i})) - \sum_{i=1}^{m} h_{i}X_{i}f(0)\right|  \\
&= \left|\sum_{i=1}^{M} (f(x_{i+1})-f(x_{i})) - \sum_{i=1}^{M} t_{i}E_{i}f(0)\right| \\
&\leq \sum_{i=1}^{M} |f(x_{i+1})-f(x_{i}) - t_{i}E_{i}f(0)) |\\
&\leq MQ \varepsilon d(h).
\end{align*}
Hence for $d(h)\leq \delta/Q$ we have:
\[|f(h) - f(0) - \langle p(h), \nabla_{H}f(0) \rangle |\leq MQ\varepsilon d(h).\]
This proves that $f$ is Pansu differentiable at $x$ with the desired derivative. 
\end{proof}

We can combine Theorem \ref{differentiableatlast} with Theorem \ref{porosityderivatives} and Proposition \ref{irregularporous} to obtain the following application of porous sets.

\begin{corollary}\label{directionaltodiff}
Let $f\colon \mathbb{G}\to \mathbb{R}$ be Lipschitz. Then there exists a $\sigma$-porous set $P$ for which the following implication holds at all points $x\notin P$: if $X_{i}f(x)$ exists for every $1\leq i\leq m$, then $f$ is Pansu differentiable at $x$.
\end{corollary}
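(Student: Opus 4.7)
The plan is to take $P$ to be the union of the two $\sigma$-porous sets produced in Theorem \ref{porosityderivatives} and Proposition \ref{irregularporous}, and then verify that the three hypotheses of Theorem \ref{differentiableatlast} are automatically satisfied at every $x \notin P$ where the coordinate horizontal derivatives exist.

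Concretely, I would let $A$ be the $\sigma$-porous set outside of which directional derivatives of $f$ act linearly (Theorem \ref{porosityderivatives}), let $I$ be the $\sigma$-porous set of irregular points of $f$ (Proposition \ref{irregularporous}), and define $P := A \cup I$. A finite union of $\sigma$-porous sets is clearly $\sigma$-porous, so $P$ has the required structure. Now suppose $x \notin P$ and $X_i f(x)$ exists for each $1 \leq i \leq m$. Because $x \notin A$, directional derivatives act linearly at $x$, so iterating the linearity property of Theorem \ref{porosityderivatives} across the basis $X_1, \ldots, X_m$ shows that $Ef(x)$ exists and equals $\sum_{i=1}^{m} a_i\, X_i f(x)$ for every $E = \sum_{i=1}^{m} a_i X_i \in V_1$. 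In particular, the first two hypotheses of Theorem \ref{differentiableatlast} are met. Since $x \notin I$, the third hypothesis (that $x$ is a regular point) is also met, and Theorem \ref{differentiableatlast} delivers Pansu differentiability of $f$ at $x$, with derivative $h \mapsto \langle p(h), \nabla_H f(x) \rangle$.

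There is no substantive obstacle here; the corollary is a clean assembly of the three main results already established. The only minor bookkeeping point is the inductive extension of the two-summand linearity conclusion in Theorem \ref{porosityderivatives} to an arbitrary linear combination of the basis vectors $X_1, \ldots, X_m$, but this is immediate: applying Theorem \ref{porosityderivatives} to the pair $(X_1, X_2)$ yields existence of $(a_1 X_1 + a_2 X_2)f(x)$, then pairing this with $X_3$ yields the three-term version, and so on up to $X_m$.
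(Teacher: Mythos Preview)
Your proposal is correct and follows essentially the same approach as the paper: take $P$ to be the union of the $\sigma$-porous set from Theorem \ref{porosityderivatives} and the set of irregular points from Proposition \ref{irregularporous}, then invoke Theorem \ref{differentiableatlast}. The paper's proof is slightly terser (it does not spell out the inductive extension of two-term linearity to $m$ terms), but the argument is the same.
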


\begin{proof}
Use Theorem \ref{porosityderivatives} to choose a $\sigma$-porous set $A$, outside which directional derivatives act linearly. Let $B$ be the set of irregular points of $f$ defined in Definition \ref{defregularpoint}, which Proposition \ref{irregularporous} states is $\sigma$-porous. Clearly the set $A\cup B$ is $\sigma$-porous. Suppose $X_{i}f(x)$ exists for all $1\leq i\leq m$ and $x\notin A\cup B$. Then Theorem \ref{differentiableatlast} asserts that $f$ is Pansu differentiable at $x$, proving the desired implication.
\end{proof}

We will use existence of directional derivatives and Corollary \ref{directionaltodiff} to prove Pansu's theorem.

\begin{lemma}\label{directionalae}
Suppose $f\colon \mathbb{G}\to \mathbb{R}$ is Lipschitz and $E\in V_{1}$. Then the directional derivative $Ef(x)$ exists for almost every $x\in \mathbb{G}$.
\end{lemma}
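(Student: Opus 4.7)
My approach is a standard Fubini-type reduction to the classical one-dimensional Rademacher theorem applied along the orbits of $E$.

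First I will show that for every \emph{fixed} $x\in\mathbb{G}$, the directional derivative $Ef$ exists $\mathcal{L}^{1}$-a.e.\ along the orbit $\{x\exp(tE):t\in\mathbb{R}\}$. Indeed, by left-invariance of $d$ and the bound $d(\exp(tE))\leq|t|\omega(E)$ recalled in the preliminaries, $d(x\exp(tE),x\exp(sE))=d(\exp((t-s)E))\leq|t-s|\omega(E)$, so $t\mapsto f(x\exp(tE))$ is Lipschitz from $\mathbb{R}$ to $\mathbb{R}$ with constant at most $\mathrm{Lip}(f)\,\omega(E)$. The one-dimensional Rademacher theorem then shows this function is differentiable at $\mathcal{L}^{1}$-a.e.\ $t$, and at any such $t$ the limit defining $Ef(x\exp(tE))$ exists.

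Next I will parameterize $\mathbb{G}$ by a flow box adapted to $E$ in order to apply Fubini. Assuming $E\neq 0$, write $E=\sum_{j=1}^{m}e_{j}X_{j}$ and pick $j_{0}\in\{1,\ldots,m\}$ with $e_{j_{0}}\neq 0$. Working in exponential coordinates so that $\exp(tE)=(te_{1},\ldots,te_{m},0,\ldots,0)$, set $H:=\{h\in\mathbb{R}^{n}:h_{j_{0}}=0\}$ and define
\[\Phi\colon\mathbb{R}\times H\to\mathbb{G},\qquad \Phi(t,h):=h\exp(tE).\]
The key technical step will be to verify that $\Phi$ is a polynomial diffeomorphism with $|\det d\Phi|\equiv|e_{j_{0}}|$. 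By the Baker-Campbell-Hausdorff formula \eqref{BCH} together with $[V_{i},V_{j}]\subset V_{i+j}$, the group product satisfies $(x\cdot y)_{k}=x_{k}+y_{k}+P_{k}(x_{1},\ldots,x_{k-1},y_{1},\ldots,y_{k-1})$ for a polynomial $P_{k}$, with $P_{k}\equiv 0$ when $k\leq m$. In particular $\Phi_{j_{0}}(t,h)=te_{j_{0}}$, while for $k\neq j_{0}$ the coordinate $\Phi_{k}$ depends linearly on $h_{k}$ with coefficient $1$ plus terms involving $t$ and $h_{i}$ with $i<k$. Ordering the $n$ input variables as $(t,h_{1},\ldots,\hat{h}_{j_{0}},\ldots,h_{n})$ and the output coordinates as $(y_{j_{0}},y_{1},\ldots,\hat{y}_{j_{0}},\ldots,y_{n})$, the Jacobian matrix becomes lower-triangular with diagonal $(e_{j_{0}},1,\ldots,1)$, so $|\det d\Phi|\equiv|e_{j_{0}}|$. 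Bijectivity is immediate: given $y\in\mathbb{G}$, recover $t=y_{j_{0}}/e_{j_{0}}$ and then $h=y\exp(-tE)\in H$.

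Finally I will close via Fubini. The set $N:=\{y\in\mathbb{G}:Ef(y)\text{ fails to exist}\}$ is Borel measurable, since the difference quotient is jointly measurable in $(y,t)$ and $N$ is expressible through equality of the $\liminf$ and $\limsup$ as $t\to 0$. For each $h\in H$ the slice $\{t\in\mathbb{R}:\Phi(t,h)\in N\}$ is $\mathcal{L}^{1}$-null by the first step, so Fubini on $\mathbb{R}\times H$ yields $\mathcal{L}^{n}(\Phi^{-1}(N))=0$, and the constant nonzero Jacobian then gives $\mathcal{L}^{n}(N)=0$. Since Haar measure on $\mathbb{G}$ is a constant multiple of $\mathcal{L}^{n}$, the lemma follows. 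The only nontrivial ingredient is the Jacobian computation, which is forced by the strict lower-triangular polynomial structure of the Carnot group law in exponential coordinates.
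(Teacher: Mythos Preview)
Your proof is correct and follows essentially the same Fubini-along-orbits strategy as the paper: reduce to one-dimensional Rademacher on each integral curve of $E$ and then integrate over a transversal. The only difference is packaging: the paper quotes a global coordinate system from \cite{BLU} in which $E$ becomes the constant field $\partial_{1}$ (so that $x\exp(tE)=x+te_{1}$ and Fubini is immediate), whereas you construct the straightening map $\Phi(t,h)=h\exp(tE)$ by hand and verify its constant Jacobian via the lower-triangular structure of the group law; your version is longer but self-contained.
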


\begin{proof}
There exist global coordinates in which $\bbG$ is $\bbR^{n}$, $E$ is the constant vector field $\partial_1$, and the Haar measure is Lebesgue measure \cite{BLU}. For each $a\in \bbR^{n}$, $F_a\colon \bbR \to \bbR$ defined by $F_a(t)=f(a+te_{1})$ is a composition of Lipschitz functions, hence Lipschitz so differentiable almost everywhere. Clearly $F_{a}'(t)=Ef(a+te_{1})$. Hence, for each $a\in \bbR^{n}$, $Ef(a+t e_{1})$ exists for almost every $t\in \bbR$. Using Fubini's theorem it follows that $Ef(x)$ exists for almost every $x\in \bbG$.
\end{proof}

We now use Lemma \ref{directionalae} and Corollary \ref{directionaltodiff} to obtain Pansu's theorem for mappings from $\mathbb{G}$ to a Euclidean space.

\begin{corollary}[Pansu's Theorem]\label{Pansucor}
Suppose $f\colon \mathbb{G}\to \mathbb{R}^N$ is Lipschitz. Then $f$ is Pansu differentiable almost everywhere.
\end{corollary}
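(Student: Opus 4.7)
The plan is to reduce the vector-valued case to the scalar case, then combine the two preceding results. Write $f=(f_1,\ldots,f_N)$, and observe that each component $f_j\colon\mathbb{G}\to\mathbb{R}$ is Lipschitz with $\mathrm{Lip}(f_j)\leq\mathrm{Lip}(f)$. For each $1\le j\le N$ and $1\le i\le m$, Lemma \ref{directionalae} yields a Lebesgue null set $N_{i,j}\subset\mathbb{G}$ outside of which $X_i f_j$ exists. Let $N_0=\bigcup_{i,j}N_{i,j}$, which is still null.

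Next, for each $j$ apply Corollary \ref{directionaltodiff} to $f_j$ to obtain a $\sigma$-porous set $P_j\subset\mathbb{G}$ such that at every point $x\notin P_j$ at which all of $X_1f_j(x),\ldots,X_mf_j(x)$ exist, $f_j$ is Pansu differentiable at $x$. Set $P=\bigcup_{j=1}^N P_j$, which is still $\sigma$-porous. Since the Haar measure on $\mathbb{G}$ is doubling (in fact Ahlfors regular), the discussion following Definition \ref{def_porous} gives $\mu(P)=0$. Hence $E:=N_0\cup P$ has measure zero in $\mathbb{G}$.

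Now fix $x\in\mathbb{G}\setminus E$. By construction, all horizontal directional derivatives $X_i f_j(x)$ exist, and $x\notin P_j$ for each $j$, so each $f_j$ is Pansu differentiable at $x$ with some group linear derivative $L_j\colon\mathbb{G}\to\mathbb{R}$; indeed by Theorem \ref{differentiableatlast}, $L_j(h)=\langle p(h),\nabla_H f_j(x)\rangle$. Define $L\colon\mathbb{G}\to\mathbb{R}^N$ by $L(h)=(L_1(h),\ldots,L_N(h))$. Since $\mathbb{R}^N$ is a step-one Carnot group whose dilations are ordinary scalar multiplication and whose group law is coordinatewise addition, $L$ is group linear as soon as each $L_j$ is. The Pansu differentiability of $f$ at $x$ then follows from the elementary inequality
\begin{equation*}
|f(xh)-f(x)-L(h)|\le\sum_{j=1}^{N}|f_j(xh)-f_j(x)-L_j(h)|,
\end{equation*}
so dividing by $d(h)$ and letting $h\to 0$ shows $f$ is Pansu differentiable at $x$ with derivative $L$. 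No substantive obstacle is expected: the scalar machinery is already in place, and the only subtle point is verifying that the Euclidean target is correctly regarded as a step-one Carnot group so that componentwise Pansu differentiability is equivalent to vector-valued Pansu differentiability.
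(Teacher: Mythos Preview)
Your proof is correct and follows essentially the same approach as the paper: apply Lemma~\ref{directionalae} and Corollary~\ref{directionaltodiff} to each scalar component, take the union of the resulting null and $\sigma$-porous exceptional sets, and then assemble the componentwise Pansu derivatives into the vector-valued derivative. The only cosmetic difference is that the paper first isolates the case $N=1$ and then reduces $N>1$ to it, whereas you treat all components in parallel from the outset; the content is the same.
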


\begin{proof} 
First assume $N=1$. Using Lemma \ref{directionalae}, we can find a measure zero set $A\subset \mathbb{G}$ such that $X_{i}f(x)$ exists for $1\leq i\leq m$ and $x\in \mathbb{G}\setminus A$. Using Corollary \ref{directionaltodiff}, choose a $\sigma$-porous set $P$ such that $x\notin P$ and existence of $X_{i}f(x)$ for $1\leq i\leq m$ together imply that $f$ is Pansu differentiable at $x$. Since porous sets have measure zero, the set $A\cup P$ has measure zero. The function $f$ is Pansu differentiable outside $A\cup P$, consequently Pansu differentiable almost everywhere.

Now assume $N>1$ and let $f=(f_1,\ldots, f_N)\colon \mathbb{G}\to \mathbb{R}^N$ be Lipschitz. Clearly $f_i\colon \bbG\to \bbR$ is Lipschitz for all $1\leq i\leq N$, therefore there exists a null set $B\subset \mathbb{G}$ so that every component $f_i$ is Pansu differentiable in $\mathbb{G}\setminus B$. Let $L\colon \mathbb{G}\to \mathbb{R}^N$ be the group linear map defined by $L=(L_1,\ldots, L_N)$ where $L_i$ is the Pansu derivative of $f_i$. For every $x\in \mathbb{G}\setminus B$ we have:
\begin{align*}
\lim_{h\to 0} \frac{|f(xh)-f(x)-L(h)|}{d(h)}&=\Big(\sum_{i=1}^N\lim_{h\to 0} \frac{(f_i(xh)-f_i(x)-L_i(h))^2}{d(h)^2}\Big)^{\frac{1}{2}}=0.
\end{align*}
Hence $f$ is Pansu differentiable almost everywhere.
\end{proof}

\end{document}